\newtheorem{theorem}{Theorem}[section]
\newtheorem{definition}[theorem]{Definition}
\newtheorem{lemma}[theorem]{Lemma}
\newtheorem{proposition}[theorem]{Proposition}
\crefname{equation}{}{}
\title{\LARGE \bf
Exponential and practical exponential stability of second-order formation control systems
}
\author{Raik Suttner and Zhiyong Sun
\thanks{Raik Suttner is with the Institute of Mathematics, University of Wuerz\-burg, Germany (email: {\tt\small raik.suttner@uni-wuerzburg.de}).}%
\thanks{Zhiyong Sun is with the Department of Automatic Control, Lund University, Sweden (email: {\tt\small zhiyong.sun@control.lth.se} and {\tt\small sun.zhiyong.cn@gmail.com}).}
}
\begin{document}
\maketitle
\begin{abstract}
We study the problem of distance-based formation shape control for autonomous agents with double-integrator dynamics. Our considerations are focused on exponential stability properties. For second-order formation systems under the standard gradient-based control law, we prove local exponential stability with respect to the total energy by applying Chetaev's trick to the Lyapunov candidate function. We also propose a novel formation control law, which does not require measurements of relative positions but instead measurements of distances. The \emph{distance-only} control law is based on an approximation of symmetric products of vector fields by sinusoidal perturbations. A suitable averaging analysis reveals that the averaged system coincides with the multi-agent system under the standard gradient-based control law. This allows us to prove practical exponential stability for the system under the distance-only control law.
\end{abstract}

\section{Introduction}
In recent years, formation control for autonomous multi-agent systems has seen rapid advances in both theoretical developments and practical applications~\cite{Oh2015}. Among various types of formation control approaches, distance-based formation control is of particular interest due to its reduced usage of global information (such as global coordinate systems) in the implementation. The goal of distance-based formation control is to reach and maintain a desired target formation, which is defined by prescribed inter-agent distances. Many of the recent studies on distance-based formation are focused on single-integrator models (e.g., \cite{chen2017global,pham2018formation,sun2016exponential,park2015formation, park2018distance,chen2019controlling}). In this paper we will consider second-order (i.e., double-integrator) formation control systems. This is motivated by the fact that, for some applications, acceleration-controlled agents provide a more realistic description of the system dynamics from a physical point of view.

The intention of this paper is to establish (practical) exponential stability of second-order distance-based formation control systems. Exponential stability  features certain beneficial properties including the robustness against small perturbations or measurement errors, and therefore has been one of the key topics in the research field of formation control \cite{mou2016undirected,sun2017robustness}. Though exponential stability for single-integrator distance-based formation systems is well understood  \cite{sun2016exponential}, a characterization of exponential stability for second-order formation systems remains a more challenging task.

There are only a few explicit stability results for second-order distance-based formation system in the literature so far. For example, local asymptotic stability for second-order formation systems is proved in \cite{oh2014distance,ramazani2017rigidity}. The proof in~\cite{oh2014distance} is based on a suitable separation of the second-order system into two first-order systems. The results in \cite{ramazani2017rigidity} are derived from an application of LaSalle's invariance principle to the total energy of the system. Moreover, local convergence of combined rigid formation and flocking control is studied in \cite{deghat2016combined}. The paper \cite{sun2017rigid} presents a comprehensive study on system dynamics for second-order rigid formation systems and establishes a connection between single-integrator and double-integrator formation systems to facilitate  stability analysis. An exponential stability result is derived in \cite{sun2017rigid} by means of the center manifold theorem. However, a clear characterization of the convergence is not available. We remark that the exponential stability for second-order formation systems has significant implications such as robustness properties \cite{sun2017robustness}  and rigid motion control \cite{de2018taming}. Though some papers (e.g., \cite{deghat2016combined, sun2017rigid}) attempted to establish the exponential convergence for second-order formation systems, the approaches are either indirect (\cite{sun2017rigid}) or it is difficult to characterize the exponential convergence rate (\cite{deghat2016combined}). The first contribution of this paper is to provide an alternative strategy to prove local exponential stability for second-order formation systems. The proposed analysis is based on Chetaev's trick for the total energy of the multi-agent system. This provides new insights into the convergence process under gradient-based controllers.

The second contribution of this paper is to introduce a \emph{distance-only} formation control method {\footnote{We clarify the differences between \emph{distance-based} formation control and \emph{distance-only} formation control. By following the convention in the literature \cite{Oh2015}, \emph{distance-based} formation means that target formation shapes are defined by distances; however, the distance-based formation control law (usually derived by a gradient approach) often involves relative position measurements. By \emph{distance-only} formation control we mean that both target formation shapes and formation control implementations only use distances.}} for second-order formation systems in order to achieve a desired rigid formation shape. In the standard gradient-based control law, even if the target formation shape is defined by a certain set of inter-agent distances, the implementation of the control law still requires all agents to measure (or communicate) relative positions with respect to their neighbors. The design of a distance-only formation control law is especially challenging because distances contain less information than relative positions. In the recent paper \cite{Suttner2018}, we have developed an approach for distance-only formation control of single-integrator systems. It sill remains open to develop a feasible distance-only formation control law for second-order systems and to determine its stability and convergence properties.

The distance-only formation control law in~\cite{Suttner2018} for single-integrator agents is based on an approximation of Lie bracket directions. The influence of the Lie brackets is revealed by a suitable averaging analysis, which is based on the findings in~\cite{Kurzweil1988,Liu1997}. This approach works well for kinematic systems. For mechanical systems, like double-integrator agents, a different strategy is needed. We show that the averaging technique in \cite{bullo2002averaging} can be used to derive similar results as in~\cite{Suttner2018} for single-integrator agents. For double-integrator agents, the averaged system is determined by symmetric products of vector fields. The symmetric products can be written as certain Lie brackets, which contain the geodesic spray; see~\cite{BulloBook}. To the best of our knowledge, this is the first time that a distance-only approach is developed for second-order formation systems. We prove practical exponential stability under the assumption of infinitesimal rigidity of the target frameworks. Furthermore, the proposed distance-only formation control law for double-integrator agents can be extended to general coordination control systems modelled by second-order dynamics, such as second-order flocking systems \cite{olfati2006flocking} and second-order multi-robot coordination systems \cite{knorn2016overview}. 

The paper is organized as follows. We present basic definitions and notations on graph and rigidity theory in \Cref{sec:infRig}. The problem formulation for distance-based formation shape control is presented in \Cref{sec:problemDescription}. \Cref{sec:gradientControl} shows a detailed proof of exponential stability for second-order formation systems under relative position measurements, while practical exponential stability for distance-only second-order formations is proved in \Cref{sec:controlLaw}. Simulation examples and comparisons are presented in \Cref{sec:simulations}, followed by concluding remarks in \Cref{sec:conclusions}. A detailed averaging analysis for the proof of practical exponential stability is given in the appendix.

\section{Basic definitions and notation}\label{sec:infRig}
Suppose that $V\colon\mathbb{R}^k\to\mathbb{R}$ is a smooth function (by \emph{smooth} we mean \emph{of class~$C^{\infty}$}). For every $p\in\mathbb{R}^k$ and every positive integer~$l$, we denote by $\operatorname{D}^l\!V(p)$ the~$l$th derivative of~$V$ at~$p$, which is an~$l$-multi-linear form on~$\mathbb{R}^k$. We treat all vectors as a column vectors and denote the transposed of~$p\in\mathbb{R}^k$ by~$p^{\top}$. The first and second derivative of~$V$ at $p\in\mathbb{R}^k$ are frequently represented by the gradient vector $\nabla{V}(p)\in\mathbb{R}^k$ and the Hessian matrix $\nabla^2V(p)\in\mathbb{R}^{k\times{k}}$, respectively. This means that we have $\operatorname{D}\!V(p)v=\langle\!\langle\nabla{V}(p),v\rangle\!\rangle$ and $\operatorname{D}^2\!V(p)(v,w)=\langle\!\langle\nabla^2{V}(p)v,w\rangle\!\rangle$ for all $p,v,w\in\mathbb{R}^k$, where $\langle\!\langle\cdot,\cdot\rangle\!\rangle$ denotes the Euclidean inner product. The Euclidean norm is denoted by~$\|\cdot\|$.

An \emph{undirected graph} $\mathcal{G}=(\mathcal{V},\mathcal{E})$ consists of a set $\mathcal{V}=\{1,\ldots,N\}$ together with a nonempty set~$\mathcal{E}$ of two-element subsets of~$\mathcal{V}$. Each element of~$\mathcal{V}$ is referred to as a \emph{vertex} of~$\mathcal{G}$ and each element of~$\mathcal{E}$ is called an \emph{edge} of~$\mathcal{G}$. As an abbreviation, we denote an edge $\{i,j\}\in\mathcal{E}$ simply by~$ij$. A \emph{framework}~$\mathcal{G}(p)$ in~$\mathbb{R}^n$ consists of the undirected graph~$\mathcal{G}$ of~$N$ vertices and a point
\[
p \ = \ (p_1^{\top},\ldots,p_N^{\top})^{\top} \, \in \, \mathbb{R}^n\times\cdots\times\mathbb{R}^n \ = \ \mathbb{R}^{nN}.
\]
Let $M$ denote the number of edges of $\mathcal{G}$. Order the edges in some way and define the so-called \emph{edge map} $f_{\mathcal{G}}\colon\mathbb{R}^{nN}\to\mathbb{R}^{M}$ of~$\mathcal{G}$ by
\begin{equation}\label{eq:edgeMap}
f_{\mathcal{G}}(p) \ := \ \big(\ldots,\|p_j-p_i\|^2,\ldots)_{ij\in\mathcal{E}}^{\top}
\end{equation}
for every $p=(p_1^{\top},\ldots,p_N^{\top})^{\top}\in\mathbb{R}^{nN}$. For the sake of simplicity, we introduce the notion of infinitesimal rigidity only for the case that the number of vertices~$N$ is greater than or equal to the dimension~$n$ of the surrounding space of the framework. This assumption is satisfied in many applications.
\begin{definition}[see, e.g., \cite{Asimow1979}]\label{def:infinitesimalRigidity}
A framework~$\mathcal{G}(p)$ is \emph{infinitesimally rigid} if and only if the rank of the derivative of the edge map~$f_{\mathcal{G}}$ at~$p$ is equal to~$nN-n(n+1)/2$.
\end{definition}
For each edge $ij\in\mathcal{E}$, let $d_{ij}$ be a positive real number. Define $d:=(d_{ij}^2)_{ij\in\mathcal{E}}\in\mathbb{R}^{M}$, where the components of~$d$ are ordered in the same way as the components of~$f_{\mathcal{G}}$. Define a nonnegative smooth function $V_{\mathcal{G},d}\colon\mathbb{R}^{nN}\to\mathbb{R}$ by
\[
V_{\mathcal{G},d}(p) \; := \; \frac{1}{4}\|f_{\mathcal{G}}(p)-d\|^2 \; = \; \frac{1}{4}\sum_{ij\in\mathcal{E}}\big(\|p_j-p_i\|^2-d_{ij}^2\big)^2
\]
for every $p=(p_1^{\top},\ldots,p_n^{\top})^{\top}\in\mathbb{R}^{nN}$. The following estimates for~$V_{\mathcal{G},d}$ are known from~\cite{Suttner2018}.
\begin{proposition}\label{thm:11112017}
Let~$V_{\mathcal{G},d}$ be defined as above.
\begin{enumerate}
	\item\label{thm:11112017:A} For every~$L>0$, there exists~$\alpha_1>0$ such that
	\[
	\|\nabla{V}_{\mathcal{G},d}(p)\|^2 \ \leq \ \alpha_1\,{V}_{\mathcal{G},d}(p)
	\]
	for every~$p\in\mathbb{R}^{nN}$ with~${V}_{\mathcal{G},d}(p)\leq{L}$.
	\item\label{thm:11112017:B} For every~$L>0$ and every integer~$l\geq2$, there exists~$\alpha_l>0$ such that
	\[
	|\operatorname{D}\!^l{V}_{\mathcal{G},d}(p)(v_1,\ldots,v_l)| \ \leq \ \alpha_l\,\|v_1\|\cdots\|v_l\|
	\]
	for every~$p\in\mathbb{R}^{nN}$ with~${V}_{\mathcal{G},d}(p)\leq{L}$ and all $v_1,\ldots,v_l\in\mathbb{R}^{nN}$.
	\item\label{thm:11112017:C} Suppose that for each~$p\in\mathbb{R}^{nN}$ with~$f_{\mathcal{G}}(p)=d$, the framework~$\mathcal{G}(p)$ is infinitesimally rigid. Then, there exist~$L,\alpha_0>0$ such that
	\[
	\|\nabla{V}_{\mathcal{G},d}(p)\|^2 \ \geq \ \alpha_0\,{V}_{\mathcal{G},d}(p)
	\]
	for every~$p\in\mathbb{R}^{nN}$ with~${V}_{\mathcal{G},d}(p)\leq{L}$.
\end{enumerate}
\end{proposition}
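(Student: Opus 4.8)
The plan is to analyze the function $V_{\mathcal{G},d}$ through the geometry of its zero level set together with quantitative facts about the edge map $f_{\mathcal{G}}$. The three assertions are really statements about how $V_{\mathcal{G},d}$ and $\nabla V_{\mathcal{G},d}$ behave on a sublevel set $\{V_{\mathcal{G},d}\le L\}$. Throughout, I would write $F(p):=f_{\mathcal{G}}(p)-d$, so that $V_{\mathcal{G},d}=\tfrac14\|F\|^2$ and, by the chain rule, $\nabla V_{\mathcal{G},d}(p)=\tfrac12\,\mathrm{D}f_{\mathcal{G}}(p)^{\top}F(p)$, where $\mathrm{D}f_{\mathcal{G}}(p)$ is the rigidity matrix whose entries are affine in $p$ (each row $\|p_j-p_i\|^2$ differentiates to $2(p_j-p_i)^{\top}$ in the relevant blocks). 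This explicit form is the workhorse for all three parts.

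For part~\ref{thm:11112017:A}, the idea is a boundedness-on-compact-sublevel-sets argument combined with homogeneity. A sublevel set $\{V_{\mathcal{G},d}\le L\}$ is \emph{not} compact (it is invariant under global translations), but $\nabla V_{\mathcal{G},d}$ is translation-invariant, so I would quotient out translations or simply estimate directly: from $\nabla V_{\mathcal{G},d}=\tfrac12\,\mathrm{D}f_{\mathcal{G}}^{\top}F$ we get $\|\nabla V_{\mathcal{G},d}(p)\|\le\tfrac12\,\|\mathrm{D}f_{\mathcal{G}}(p)\|\,\|F(p)\|$, so that $\|\nabla V_{\mathcal{G},d}(p)\|^2\le\tfrac14\|\mathrm{D}f_{\mathcal{G}}(p)\|^2\,\|F(p)\|^2 = \|\mathrm{D}f_{\mathcal{G}}(p)\|^2\,V_{\mathcal{G},d}(p)$. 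It then suffices to bound $\|\mathrm{D}f_{\mathcal{G}}(p)\|^2$ uniformly on the sublevel set; since the entries of $\mathrm{D}f_{\mathcal{G}}$ are linear in the relative positions $p_j-p_i$, and $V_{\mathcal{G},d}(p)\le L$ forces each $\big|\,\|p_j-p_i\|^2-d_{ij}^2\,\big|\le 2\sqrt{L}$ and hence $\|p_j-p_i\|^2\le d_{ij}^2+2\sqrt{L}$, all relative displacements are bounded. Setting $\alpha_1$ to be the resulting supremum of $\|\mathrm{D}f_{\mathcal{G}}(p)\|^2$ completes this part. Part~\ref{thm:11112017:B} is the most mechanical: since $f_{\mathcal{G}}$ is a quadratic polynomial, $V_{\mathcal{G},d}$ is a quartic polynomial, so $\mathrm{D}^l V_{\mathcal{G},d}$ vanishes identically for $l\ge 5$ and is a polynomial of degree $\le 4-l$ in $p$ for $l\in\{2,3,4\}$; the coefficients of these derivatives are themselves polynomials in the relative positions, which are bounded on the sublevel set exactly as above, giving the multilinear bound $|\mathrm{D}^l V_{\mathcal{G},d}(p)(v_1,\dots,v_l)|\le\alpha_l\|v_1\|\cdots\|v_l\|$ by uniformly bounding the operator norm of the $l$-linear form.

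Part~\ref{thm:11112017:C} is the substantive claim and the main obstacle. The lower bound $\|\nabla V_{\mathcal{G},d}(p)\|^2\ge\alpha_0\,V_{\mathcal{G},d}(p)$ is a local error bound (a Łojasiewicz-type inequality with the optimal exponent, made possible by infinitesimal rigidity) and it can only hold on a small sublevel set, since $\nabla V_{\mathcal{G},d}$ vanishes at spurious critical points away from the target shape while $V_{\mathcal{G},d}$ does not. The plan is to work near the solution set $Z:=\{p:f_{\mathcal{G}}(p)=d\}=\{V_{\mathcal{G},d}=0\}$. Infinitesimal rigidity means that at each $p^\star\in Z$ the rigidity matrix $\mathrm{D}f_{\mathcal{G}}(p^\star)$ has maximal rank $nN-n(n+1)/2$, so its smallest nonzero singular value $\sigma>0$ controls the image; concretely, because $F(p^\star)=0$, a first-order Taylor expansion gives $F(p)=\mathrm{D}f_{\mathcal{G}}(p^\star)(p-p^\star)+O(\|p-p^\star\|^2)$, and one checks that the component of $F(p)$ lies (to leading order) in the range of $\mathrm{D}f_{\mathcal{G}}(p^\star)$, the directions orthogonal to $Z$. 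The key estimate is then $\|\mathrm{D}f_{\mathcal{G}}(p)^{\top}F(p)\|\ge\sigma\,\|F(p)\|$ modulo higher-order terms, which upon squaring yields $\|\nabla V_{\mathcal{G},d}(p)\|^2\ge\tfrac14\sigma^2\|F(p)\|^2=\sigma^2\,V_{\mathcal{G},d}(p)$, so $\alpha_0$ can be taken close to $\sigma^2$. I expect the delicate point to be handling the higher-order remainder uniformly: one must show that on a sufficiently small sublevel set the linearization dominates, which requires either a compactness argument on the quotient of $Z$ by the group of rigid motions (so that $\sigma$ is bounded below uniformly over $Z$) or an appeal to the constant-rank / Łojasiewicz machinery. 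Since this exact estimate is attributed to~\cite{Suttner2018}, I would invoke that reference for the uniform version while recording the singular-value mechanism above as the conceptual reason the bound holds.
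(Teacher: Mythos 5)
The paper itself offers no proof of this proposition: it is quoted verbatim from \cite{Suttner2018} (``The following estimates for~$V_{\mathcal{G},d}$ are known from~\cite{Suttner2018}''), so there is no internal argument to compare against and your proposal must stand on its own. Your proofs of parts (1) and (2) do stand: the identity $\nabla V_{\mathcal{G},d}(p)=\tfrac12\operatorname{D}\!f_{\mathcal{G}}(p)^{\top}F(p)$, the observation that $V_{\mathcal{G},d}$ and all of its derivatives depend only on the edge differences $p_j-p_i$, and the bound $\big|\,\|p_j-p_i\|^2-d_{ij}^2\big|\le 2\sqrt{L}$ on the sublevel set together neutralize the non-compactness of $\{V_{\mathcal{G},d}\le L\}$; the degree count in (2) ($V_{\mathcal{G},d}$ is quartic, so $\operatorname{D}^l V_{\mathcal{G},d}\equiv 0$ for $l\ge 5$ and has polynomial coefficients of degree $4-l$ otherwise) is correct and gives the multilinear bound.

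Part (3) is where you have a genuine gap, and it is not quite the one you flagged. The step ``$\|\operatorname{D}\!f_{\mathcal{G}}(p)^{\top}F(p)\|\ge\sigma\,\|F(p)\|$ modulo higher-order terms'' cannot be extracted from the Taylor expansion around an \emph{arbitrary} $p^{\star}\in Z:=\{f_{\mathcal{G}}=d\}$. The enemy is a displacement $p-p^{\star}$ nearly tangent to $Z$, i.e.\ nearly in $\ker\operatorname{D}\!f_{\mathcal{G}}(p^{\star})$, which has dimension $n(n+1)/2$: for such $p$ the linear part $\operatorname{D}\!f_{\mathcal{G}}(p^{\star})(p-p^{\star})$ of $F(p)$ is much smaller than $\|p-p^{\star}\|$ and can be of the same order as the $O(\|p-p^{\star}\|^2)$ remainder, so ``$F(p)$ lies in the range of $\operatorname{D}\!f_{\mathcal{G}}(p^{\star})$ to leading order'' becomes vacuous and no ratio argument closes. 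The standard repair is to choose $p^{\star}$ as a \emph{nearest} point of $Z$ to $p$: since the rank of $\operatorname{D}\!f_{\mathcal{G}}$ is maximal on $Z$ and hence locally constant, the constant-rank theorem makes $Z$ a smooth manifold with $T_{p^{\star}}Z=\ker\operatorname{D}\!f_{\mathcal{G}}(p^{\star})$, so this choice forces $p-p^{\star}\perp\ker\operatorname{D}\!f_{\mathcal{G}}(p^{\star})$, whence $\|F(p)\|$ is comparable to $\|p-p^{\star}\|$ from below and the remainder is genuinely higher order. After that, the compactness argument you mention (modulo translations; note that infinitesimal rigidity forces $\mathcal{G}$ to be connected, so $Z$ and the sublevel sets are bounded modulo translations) supplies both the uniform lower bound on $\sigma$ and the fact that small sublevel sets lie in a small neighborhood of $Z$ --- your uniformity concern is real but secondary to the base-point issue. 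Since you ultimately defer part (3) to \cite{Suttner2018}, exactly as the paper does, your write-up matches the paper's level of detail; but as a standalone proof, part (3) is a sketch whose central inequality is not yet justified as written.
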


\section{Distance-based formation shape control -- problem description}\label{sec:problemDescription}
\begin{figure*}
\centering\hypertarget{Fig1a}{\includegraphics{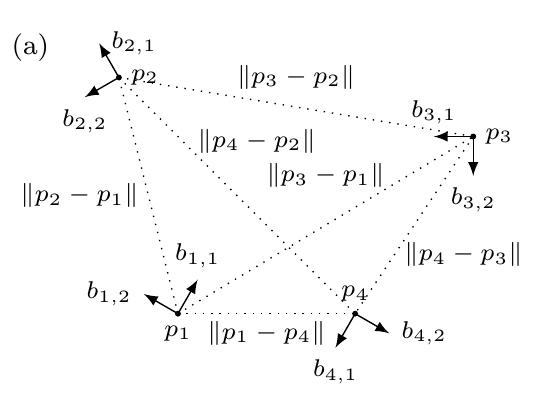}}\qquad\quad\hypertarget{Fig1b}{\includegraphics{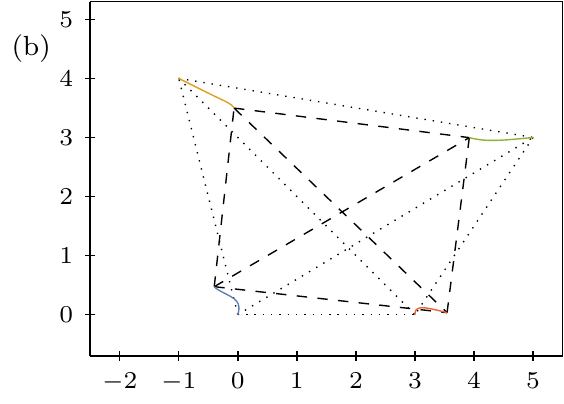}}\qquad\quad\hypertarget{Fig1c}{\includegraphics{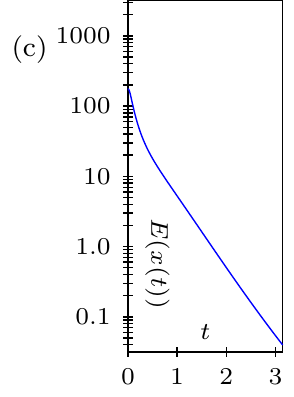}}
\caption{(a): Sketch of a multi-agent system as described in \Cref{sec:problemDescription}. This particular configuration is considered in \Cref{sec:simulations}. (b): Formation shape control of a four-agent system under the gradient-based control law in \Cref{sec:gradientControl}. The initial formation is indicated by dotted lines and the finial formation is indicated by dashed lines. (c): Exponential decay of the total energy during the simulation.}
\label{fig:agentSketch}
\end{figure*}
We consider a system of $N$ point agents in $\mathbb{R}^n$. For each $i\in\{1,\ldots,N\}$, let $b_{i,1},\ldots,b_{i,n}\in\mathbb{R}^n$ be an orthonormal basis of $\mathbb{R}^n$. We assume that the motion of agent $i$ with position~$p_i\in\mathbb{R}^n$ and velocity~$v_i\in\mathbb{R}^n$ is determined by the dynamic equations
\begin{subequations}\label{eq:dynamicPoints}
\begin{align}
\dot{p}_i & \ = \ v_i, \\
\dot{v}_i & \ = \ \sum_{k=1}^{n}a_{i,k}\,b_{i,k},
\end{align}
\end{subequations}
where each of the~$a_{i,k}$ is a real-valued input channel to control the acceleration into direction~$b_{i,k}$. The situation is depicted in \Cref{fig:agentSketch}~\hyperlink{Fig1a}{(a)}. As in \Cref{sec:infRig}, let $\mathcal{G}=(\mathcal{V},\mathcal{E})$ be an undirected graph of $N$ vertices. For each edge $ij\in\mathcal{E}$, let $d_{ij}$ be a positive real number, which is the \emph{desired distance} between agents~$i$ and~$j$. We assume that these distances are \emph{realizable} in~$\mathbb{R}^n$, i.e., the \emph{set}
\begin{equation}\label{eq:setOfDesiredFormations}
\{ (p_1^{\top},\ldots,p_N^{\top})^{\top}\in\mathbb{R}^{nN} \ | \ \forall{ij\in\mathcal{E}}\colon\|p_j-p_i\|=d_{ij} \}
\end{equation}
\emph{of desired formations} is not empty. Note that~\cref{eq:setOfDesiredFormations} coincides with the set of all~$p\in\mathbb{R}^{nN}$ with $f_{\mathcal{G}}(p)=(d_{ij}^2)_{ij\in\mathcal{E}}$, where the edge map~$f_{\mathcal{G}}$ is given by~\cref{eq:edgeMap}. We are interested in a distributed control law that steers the multi-agent system into such a target formation.

The set~\cref{eq:setOfDesiredFormations} of desired formations is defined by the prescribed inter-agent distances~$d_{ij}$. It is assumed that the agents are equipped with sensors so that they can gather information about the other members of the team. In this paper, we consider two different kinds of sensed variables: measurements of relative positions (in \Cref{sec:gradientControl}) and measurements of distances (in \Cref{sec:controlLaw}). In the first case, we assume that agent~$i$ can measure the \emph{relative position} $p_j-p_i$ of agent~$j$ if and only if~$ij$ is an edge of~$\mathcal{G}$. In the second case, we only assume that agent~$i$ can measure the \emph{distance} $\|p_j-p_i\|$ to agent~$j$ if and only if~$ij$ is an edge of~$\mathcal{G}$. In both cases, we assume that each agent can access its own velocity with respect to the individual coordinate system that is determined by the vectors~$b_{i,k}$ in~\cref{eq:dynamicPoints}. In other words, if~$v_i\in\mathbb{R}^n$ is the current velocity of agent~$i$, then we assume that, for every $k\in\{1,\ldots,n\}$, the component $\langle\!\langle{v_i,b_{i,k}}\rangle\!\rangle$ of~$v_i$ with respect to~$b_{i,k}$ are known to agent~$i$.

\section{Exponential stability for double-integrator agents with relative position measurements}\label{sec:gradientControl}
A common approach to stabilize the multi-agent system~\cref{eq:dynamicPoints} around the set~\cref{eq:setOfDesiredFormations} of desired formations is a gradient-based control law as follows. For every $i\in\{1,\ldots,N\}$, agent~$i$ is assigned with a suitable local potential function $V_i\colon\mathbb{R}^{nN}\to\mathbb{R}$. A frequently used choice of the~$V_i$ is
\begin{equation}\label{eq:localPotential}
V_i(p) \ := \ \frac{1}{4}\sum_{j\colon\!{ij}\in\mathcal{E}}\big(\|p_j-p_i\|^2-d_{ij}^2\big)^2.
\end{equation}
For every $i\in\{1,\ldots,N\}$, choose a positive (damping) constants $r_i$. We consider~\cref{eq:dynamicPoints} under the control law
\begin{equation}\label{eq:gradientControlLaw}
a_{i,k} \ = \ -r_i\langle\!\langle{v_i},b_{i,k}\rangle\!\rangle - \langle\!\langle\nabla_{p_i}V_i(p),b_{i,k}\rangle\!\rangle,
\end{equation}
where $\nabla_{p_i}V_i(p)\in\mathbb{R}^n$ denotes the gradient of~$V_i$ with respect to the~$i$th position vector $p_i\in\mathbb{R}^{n}$ at $p=(p_1^{\top},\ldots,p_N^{\top})^{\top}\in\mathbb{R}^{nN}$. Note that an implementation of~\cref{eq:gradientControlLaw} requires measurements of relative positions. When we insert~\cref{eq:gradientControlLaw} into~\cref{eq:dynamicPoints},   we obtain the closed-loop system
\begin{subequations}\label{eq:gradientSystem1}
\begin{align}
\dot{p}_i & \ = \ v_i, \qquad i = 1,\ldots,N, \\
\dot{v}_i & \ = \ -r_i\,v_i - \nabla_{p_i}{V_i}(p).
\end{align}
\end{subequations}
The \emph{total energy}~$E\colon\mathbb{R}^{2nN}\to\mathbb{R}$ of the multi-agent system is the sum
\begin{equation}\label{eq:totalEnergy}
E(x) \ := \ T(v) + V(p)
\end{equation}
of the \emph{kinetic energy}
\begin{equation}\label{eq:kineticEnergy}
T(v) \ := \ \frac{1}{2}\|v\|^2 \ = \ \sum_{i=1}^{N}\frac{1}{2}\|v_i\|^2
\end{equation}
and the \emph{potential energy}
\begin{equation}\label{eq:potentialEnergy}
V(p) \ := \ \frac{1}{4}\,\sum_{ij\in\mathcal{E}}\big(\|p_j-p_i\|^2-d_{ij}^2\big)^2\quad
\end{equation}
for every $x=(p^{\top},v^{\top})^{\top}$ with $p=(p_1^{\top},\ldots,p_N^{\top})^{\top}\in\mathbb{R}^{nN}$ and $v^{\top}=(v_1^{\top},\ldots,v_N^{\top})^{\top}\in\mathbb{R}^{nN}$. Using $\nabla_{p_i}V_i=\nabla_{p_i}V$ for every $i\in\{1,\ldots,N\}$, we can write~\cref{eq:gradientSystem1} equivalently as the (second-order) \emph{gradient system}
\begin{subequations}\label{eq:gradientSystem2}
\begin{align}
\dot{p} & \ = \ v, \\
\dot{v} & \ = \ -R\,v - \nabla{V}(p)
\end{align}
\end{subequations}
with the $nN\times{nN}$ diagonal matrix
\begin{equation}\label{eq:dissipativeForce}
R \ := \ \operatorname{diag}(\underbrace{r_1,\ldots,r_1}_{\text{$n$ times}}, \ \ldots, \ \underbrace{r_N,\ldots,r_N}_{\text{$n$ times}}).\quad
\end{equation}
For every $\varepsilon\geq{0}$, we define a function $E_{\varepsilon}\colon\mathbb{R}^{2nN}\to\mathbb{R}$ by
\begin{equation}\label{eq:ChetaevTrick}
E_{\varepsilon}(x) \ := \ E(x) + \varepsilon\,\langle\!\langle\nabla{V}(p),v\rangle\!\rangle
\end{equation}
for every $x=(p^{\top},v^{\top})^{\top}\in\mathbb{R}^{2nN}$. Considering $E_{\varepsilon}$ with some sufficiently small positive~$\varepsilon$ instead of the total energy~$E=E_0$ is sometimes referred to as \emph{Chetaev's trick} (e.g. in~\cite{BulloBook}), which can be traced back to~\cite{ChetaevBook}. We denote by $\dot{E}_{\varepsilon}\colon\mathbb{R}^{2nN}\to\mathbb{R}$ the derivative of~$E_{\varepsilon}$ along solutions of~\cref{eq:gradientSystem2}, i.e., we let
\begin{align}\label{eq:ChetaevTrickDerivative}
\dot{E}_{\varepsilon}(x) & \ := \ -\langle\!\langle{R\,v},v\rangle\!\rangle + \varepsilon\,\langle\!\langle\nabla^2{V}(p)\,v,v\rangle\!\rangle \nonumber \\
& \qquad - \varepsilon\,\|\nabla{V}(p)\|^2 - \varepsilon\,\langle\!\langle{R\,v},\nabla{V}(p)\rangle\!\rangle
\end{align}
for every $x=(p^{\top},v^{\top})^{\top}\in\mathbb{R}^{2nN}$.
\begin{lemma}\label{thm:ChetaevTrick}
Suppose that for every point~$p$ of~\cref{eq:setOfDesiredFormations}, the framework~$\mathcal{G}(p)$ is infinitesimally rigid. Then, there exist $\varepsilon,L,\gamma_0,\gamma_1,\gamma_2>0$ such that
\begin{align*}
\gamma_0\,E(x) \ \leq \ E_{\varepsilon}(x) & \ \leq \ \gamma_1\,E(x), \\
\dot{E}_{\varepsilon}(x) & \ \leq \ -\gamma_2\,E_{\varepsilon}(x)
\end{align*}
for every $x=(p^{\top},v^{\top})^{\top}\in\mathbb{R}^{2nN}$ with $V(p)\leq{L}$.
\end{lemma}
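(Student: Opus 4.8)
The plan is to prove the two assertions separately, applying the three estimates of \Cref{thm:11112017} to the potential energy~$V$ (which is literally the function $V_{\mathcal{G},d}$) on the sublevel set $\{V(p)\leq L\}$. For the norm equivalence I would control the cross term in~\cref{eq:ChetaevTrick} by Cauchy--Schwarz together with the upper gradient bound of \Cref{thm:11112017} (part~\ref{thm:11112017:A}): since $|\langle\!\langle\nabla V(p),v\rangle\!\rangle|\leq\|\nabla V(p)\|\,\|v\|\leq\sqrt{\alpha_1 V(p)}\,\|v\|$, a single application of Young's inequality $ab\leq\tfrac12(a^2+b^2)$ gives $|\langle\!\langle\nabla V(p),v\rangle\!\rangle|\leq\tfrac{\alpha_1}{2}V(p)+\tfrac12\|v\|^2\leq c_1\,E(x)$ with $c_1:=\max\{\alpha_1/2,1\}$, using $\tfrac12\|v\|^2=T(v)$ and $E=T+V$. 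Inserting this into $E_\varepsilon=E+\varepsilon\langle\!\langle\nabla V,v\rangle\!\rangle$ then yields $(1-\varepsilon c_1)E\leq E_\varepsilon\leq(1+\varepsilon c_1)E$, so the first inequality holds with $\gamma_0:=1-\varepsilon c_1$ and $\gamma_1:=1+\varepsilon c_1$ as soon as $\varepsilon<1/c_1$.

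For the decay estimate I would bound the four terms of~\cref{eq:ChetaevTrickDerivative} one at a time. Writing $r_{\min}:=\min_i r_i>0$ and $r_{\max}:=\max_i r_i$, the dominant damping term satisfies $-\langle\!\langle Rv,v\rangle\!\rangle\leq-2r_{\min}T(v)$; the Hessian bound of \Cref{thm:11112017} (part~\ref{thm:11112017:B} with $l=2$) gives $\varepsilon\langle\!\langle\nabla^2 V(p)v,v\rangle\!\rangle\leq2\varepsilon\alpha_2\,T(v)$; and the lower gradient bound of \Cref{thm:11112017} (part~\ref{thm:11112017:C}), which is exactly the place where infinitesimal rigidity enters, produces the genuinely dissipative contribution $-\varepsilon\|\nabla V(p)\|^2\leq-\varepsilon\alpha_0\,V(p)$. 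The remaining indefinite coupling $-\varepsilon\langle\!\langle Rv,\nabla V(p)\rangle\!\rangle$ I would split by Young's inequality with a free weight~$a>0$, again using the upper gradient bound, to obtain $\varepsilon|\langle\!\langle Rv,\nabla V(p)\rangle\!\rangle|\leq\varepsilon r_{\max}\sqrt{\alpha_1 V(p)}\,\|v\|\leq\varepsilon a\,T(v)+\tfrac{\varepsilon r_{\max}^2\alpha_1}{2a}\,V(p)$.

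Collecting everything gives $\dot E_\varepsilon(x)\leq-(2r_{\min}-2\varepsilon\alpha_2-\varepsilon a)\,T(v)-(\varepsilon\alpha_0-\tfrac{\varepsilon r_{\max}^2\alpha_1}{2a})\,V(p)$, and the goal is to render both coefficients strictly positive. I would fix the $V(p)$ coefficient first, independently of~$\varepsilon$, by choosing $a$ large enough that $r_{\max}^2\alpha_1/(2a)<\alpha_0$ (for instance $a:=r_{\max}^2\alpha_1/\alpha_0$, which leaves the coefficient $\varepsilon\alpha_0/2$). With $a$ thus fixed, the $T(v)$ coefficient becomes positive as soon as $\varepsilon<2r_{\min}/(2\alpha_2+a)$. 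Taking $\varepsilon$ below this threshold \emph{and} below $1/c_1$ then yields $\dot E_\varepsilon(x)\leq-c\,(T(v)+V(p))=-c\,E(x)$ for some $c>0$, and the upper estimate $E(x)\geq E_\varepsilon(x)/\gamma_1$ from the first part finally delivers $\dot E_\varepsilon(x)\leq-(c/\gamma_1)\,E_\varepsilon(x)$, so that $\gamma_2:=c/\gamma_1$ closes the argument.

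I expect the main obstacle to be the bookkeeping of the order in which the interdependent constants are selected: $L$ and $\alpha_0$ are dictated by the rigidity estimate, then $\alpha_1$ and $\alpha_2$ are fixed for that same~$L$, then $a$ is tuned to let the dissipative $-V(p)$ term dominate the position--velocity coupling, and only last is $\varepsilon$ chosen small enough to guarantee both the norm equivalence and the negativity of the velocity coefficient. Conceptually, the crux is recognizing why Chetaev's trick is needed at all: the undamped derivative $\dot E=-\langle\!\langle Rv,v\rangle\!\rangle$ is merely negative \emph{semi}definite, vanishing on the whole set $\{v=0\}$, whereas the $\varepsilon$-perturbation injects the strictly negative term $-\varepsilon\|\nabla V\|^2$ which, under infinitesimal rigidity, is bounded below by a multiple of $V(p)$ and thereby restores exponential decay simultaneously in the kinetic and the potential part of the energy.
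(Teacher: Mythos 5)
Your proof is correct, and it rests on the same pillars as the paper's: Proposition~\ref{thm:11112017} (with infinitesimal rigidity entering only through the lower gradient bound of part~\ref{thm:11112017:C}), Cauchy--Schwarz on the cross terms, and a sufficiently small~$\varepsilon$; the only genuine difference is how the final domination argument is packaged. You split the cross terms by Young's inequality with a free weight~$a$ and work directly in the pair $(V(p),T(v))$, which yields fully explicit constants ($\gamma_0=1-\varepsilon c_1$, $\gamma_1=1+\varepsilon c_1$, $\gamma_2=c/\gamma_1$, with explicit admissible ranges for $a$ and $\varepsilon$). The paper instead works in the pair $(\|\nabla{V}(p)\|,\|v\|)$: it encodes the three Cauchy--Schwarz estimates as quadratic forms given by $2\times2$ symmetric matrices $O_{\varepsilon},P_{\varepsilon},Q_{\varepsilon}$, observes that these are positive definite for small $\varepsilon$, and concludes by equivalence of norms on $\mathbb{R}^2$. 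The matrix route is shorter and would absorb additional cross terms gracefully, but it is non-constructive in the constants; your route is more elementary and quantitative, and would be the version to use if one wanted to track how $\gamma_2$ (hence the decay rate in Theorem~\ref{thm:exponentialStability}) depends on $r_{\text{min}}$, $r_{\text{max}}$, $\alpha_2$, and the rigidity constant $\alpha_0$. Your ordering of the constant choices ($L,\alpha_0$ from rigidity first, then $\alpha_1,\alpha_2$ for that same sublevel set, then $a$, and $\varepsilon$ last) is exactly the dependency structure that the paper's proof uses implicitly.
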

\begin{proof}
We use the same strategy as in the proof of Theorem~6.45 in~\cite{BulloBook}. By Proposition~\ref{thm:11112017}, there exist $L,\alpha_0,\alpha_1,\alpha_2>0$ such that
\begin{align*}
\alpha_0\,V(p) \ \leq \ \|\nabla{V}(p)\|^2 & \ \leq \ \alpha_1\,V(p), \\
\|\nabla^2{V}(p)\,v\| & \ \leq \ \alpha_2\,\|v\|
\end{align*}
for every $p\in\mathbb{R}^{nN}$ with $V(p)\leq{L}$ and every $v\in\mathbb{R}^{nN}$. Let $r_{\text{min}}>0$ and $r_{\text{max}}>0$ denote the minimum and the maximum of~$r_1,\ldots,r_N$, respectively. Using the Cauchy-Schwarz inequality, we conclude that
\begin{align*}
E_{\varepsilon}(x)  \ \geq \ & \frac{1}{2}\|v\|^2 + \frac{1}{\alpha_1}\,\|\nabla{V}(p)\|^2 - \varepsilon\,\|\nabla{V}(p)\|\,\|v\|, \\
E_{\varepsilon}(x)  \ \leq \ & \frac{1}{2}\|v\|^2 + \frac{1}{\alpha_0}\,\|\nabla{V}(p)\|^2 + \varepsilon\,\|\nabla{V}(p)\|\,\|v\|, \\
\dot{E}_{\varepsilon}(x)  \ \leq \ & -r_{\text{min}}\,\|v\|^2 + \varepsilon\,\alpha_2\,\|v\|^2 \\
& - \varepsilon\,\|\nabla{V}(p)\|^2 + \varepsilon\,r_{\text{max}}\,\|v\|\,\|\nabla{V}(p)\|
\end{align*}
for every $x=(p^{\top},v^{\top})^{\top}\in\mathbb{R}^{2nN}$ with $V(p)\leq{L}$. For every $\varepsilon\geq0$, define the real symmetric $2\times{2}$ matrices
\begin{align*}
O_{\varepsilon} & \ := \ \begin{pmatrix} 1/\alpha_1 & -\varepsilon/2 \\ -\varepsilon/2 & 1/2 \end{pmatrix}, \qquad P_{\varepsilon} \ := \ \begin{pmatrix} 1/\alpha_0 & \varepsilon/2 \\ \varepsilon/2 & 1/2 \end{pmatrix} \allowdisplaybreaks \\
Q_{\varepsilon} & \ := \ \begin{pmatrix}\varepsilon & -\varepsilon\,r_{\text{max}}/2 \\ -\varepsilon\,r_{\text{max}}/2 &  r_{\text{min}}-\varepsilon\,\alpha_2 \end{pmatrix}.
\end{align*}
The diagonal matrices $O_0,P_0$ are obviously positive definite. It is also easy to check that there exists some sufficiently small $\varepsilon>0$ such that $O_{\varepsilon},P_{\varepsilon},Q_{\varepsilon}$ are positive definite. Thus, for each $A\in\{O_0,P_0,O_{\varepsilon},P_{\varepsilon},Q_{\varepsilon}\}$, we can define a norm $\|\cdot\|_A$ on $\mathbb{R}^2$ by $\|w\|_A:=(w^{\top}Aw)^{1/2}$. Then, we have
\begin{align*}
\|w(x)\|_{O_0} \ \leq \ E(x) & \ \leq \ \|w(x)\|_{P_0}, \allowdisplaybreaks \\
\|w(x)\|_{O_\varepsilon} \ \leq \ E_{\varepsilon}(x) & \ \leq \ \|w(x)\|_{P_\varepsilon}, \allowdisplaybreaks \\
\dot{E}_{\varepsilon}(x) & \ \leq \ - \|w(x)\|_{Q_\varepsilon}
\end{align*}
for every $x=(p^{\top},v^{\top})^{\top}\in\mathbb{R}^{2nN}$ with $V(p)\leq{L}$ together with the abbreviation $w(x)=(\|\nabla{V}(p)\|, \|v\|)^{\top}\in\mathbb{R}^2$. Now the claim follows from the well-known fact that all norms on a finite dimensional vector space are equivalent.
\end{proof}

An immediate consequence of Lemma~\ref{thm:ChetaevTrick} is that, under the assumption of infinitesimal rigidity, the set~\cref{eq:setOfDesiredFormations} of desired formations is \emph{locally exponentially stable} for the gradient system~\cref{eq:gradientSystem1} with respect to the total energy.
\begin{theorem}\label{thm:exponentialStability}
Suppose that for every point~$p$ of~\cref{eq:setOfDesiredFormations}, the framework~$\mathcal{G}(p)$ is infinitesimally rigid. Then, there exist $L,\lambda,\mu>0$ such that for every~$x_0\in\mathbb{R}^{2nN}$ with~$E(x_0)\leq{L}$, the maximal solution~$x$ of~\cref{eq:gradientSystem1} with~$x(0)=x_0$ satisfies
\[
E(x(t)) \ \leq \ \lambda\,E(x_0)\,\mathrm{e}^{-\mu{t}}
\]
for every~$t\geq{0}$.
\end{theorem}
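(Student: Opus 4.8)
The plan is to treat the modified energy $E_\varepsilon$ from \Cref{thm:ChetaevTrick} as a strict Lyapunov function and to convert the two inequalities of that lemma into the claimed exponential estimate for the genuine total energy~$E$. First I would invoke \Cref{thm:ChetaevTrick} to fix constants $\varepsilon,L_1,\gamma_0,\gamma_1,\gamma_2>0$ with $\gamma_0\,E(x)\leq E_\varepsilon(x)\leq\gamma_1\,E(x)$ and $\dot{E}_\varepsilon(x)\leq-\gamma_2\,E_\varepsilon(x)$ valid on the sublevel set $\{x : V(p)\leq L_1\}$. Since these estimates hold only on that set, the crux of the argument is a trapping (forward invariance) step: I would show that initial data with sufficiently small total energy generate trajectories that never leave $\{V(p)\leq L_1\}$, so that the differential inequality stays available for all forward time.

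Concretely, I would set the theorem's threshold to $L:=\gamma_0\,L_1/(2\gamma_1)$ and let $x=(p^{\top},v^{\top})^{\top}$ be the maximal solution with $E(x_0)\leq L$ on its forward existence interval $[0,t_{\max})$. Because $V(p)\leq E(x)\leq E_\varepsilon(x)/\gamma_0$, as long as the trajectory remains in $\{V(p)\leq L_1\}$ the lemma gives $\dot{E}_\varepsilon\leq0$, whence $E_\varepsilon(x(t))\leq E_\varepsilon(x_0)\leq\gamma_1\,E(x_0)\leq\gamma_1\,L=\gamma_0\,L_1/2$ and therefore $V(p(t))\leq L_1/2<L_1$. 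A standard continuity/supremum argument then upgrades this to the statement that $\{V(p)\leq L_1\}$ is never exited on $[0,t_{\max})$, the strict inequality $L_1/2<L_1$ being exactly what keeps the trajectory away from the boundary.

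It remains to rule out finite escape time, i.e.\ to show $t_{\max}=\infty$, and this is where the non-compactness of the problem (the energy is invariant under rigid motions, so $p$ is not a priori bounded) requires some care. Here I would use the first estimate of \Cref{thm:11112017}, which bounds $\|\nabla V(p)\|$ on $\{V(p)\leq L_1\}$, while $\tfrac12\|v\|^2=T(v)\leq E(x)\leq L_1/2$ bounds~$v$; hence $\dot{v}=-R\,v-\nabla V(p)$ stays bounded, $v$ remains in a compact set, and $p$ grows at most linearly, so the solution cannot escape in finite time. Finally, on $[0,\infty)$ the comparison lemma applied to $\dot{E}_\varepsilon\leq-\gamma_2\,E_\varepsilon$ yields $E_\varepsilon(x(t))\leq E_\varepsilon(x_0)\,\mathrm{e}^{-\gamma_2 t}$, and sandwiching back through the equivalence between $E$ and $E_\varepsilon$ gives
\[
E(x(t)) \ \leq \ \tfrac{1}{\gamma_0}\,E_\varepsilon(x(t)) \ \leq \ \tfrac{\gamma_1}{\gamma_0}\,E(x_0)\,\mathrm{e}^{-\gamma_2 t},
\]
which is the assertion with $\lambda=\gamma_1/\gamma_0$ and $\mu=\gamma_2$. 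The principal obstacle, as indicated, is the combined trapping-and-completeness step, since the Lyapunov estimates are purely local in~$V$ and the closed-loop vector field is neither globally Lipschitz nor coercive.
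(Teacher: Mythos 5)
Your proposal is correct and follows exactly the route the paper intends: the paper states \Cref{thm:exponentialStability} as an ``immediate consequence'' of \Cref{thm:ChetaevTrick}, and your argument is precisely that consequence spelled out, with the trapping step, the exclusion of finite escape time via \Cref{thm:11112017}, and the comparison-lemma sandwich $\lambda=\gamma_1/\gamma_0$, $\mu=\gamma_2$ all handled correctly. The only difference is that you make explicit the forward-invariance and completeness details that the paper leaves unstated.
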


An simulation example for the exponential stability of~\cref{eq:dynamicPoints} under~\cref{eq:gradientControlLaw} is presented in parts~\hyperlink{Fig1b}{(b)} and~\hyperlink{Fig1c}{(c)} of \Cref{fig:agentSketch}. The choice of parameters and initial positions for the simulation is described in \Cref{sec:simulations}.

\section{Practical exponential stability for double-integrator agents with distance-only measurements}\label{sec:controlLaw}
\begin{figure*}
\centering$\begin{matrix}\hypertarget{Fig2a}{\includegraphics{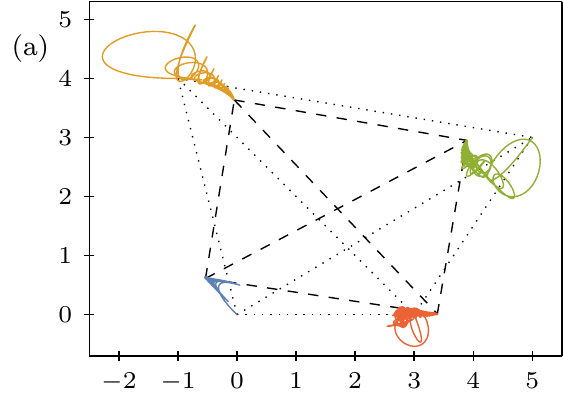}}\qquad&\qquad\hypertarget{Fig2b}{\includegraphics{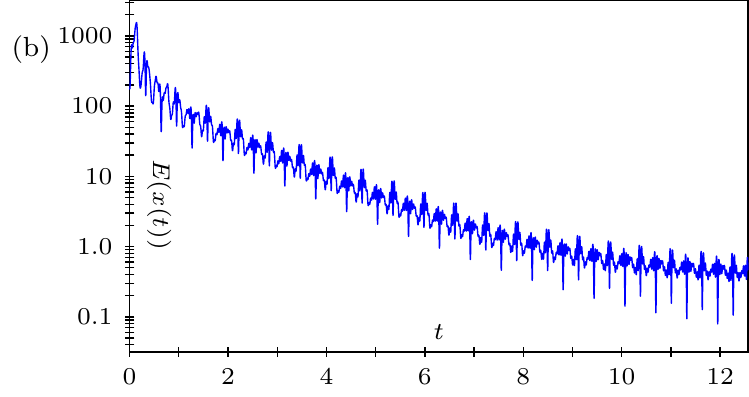}}\\\hypertarget{Fig2c}{\includegraphics{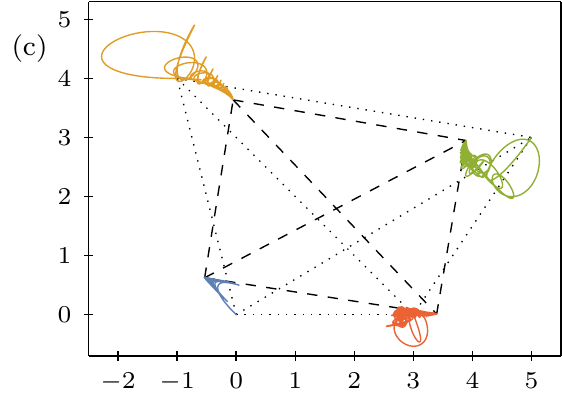}}\qquad&\qquad\hypertarget{Fig2d}{\includegraphics{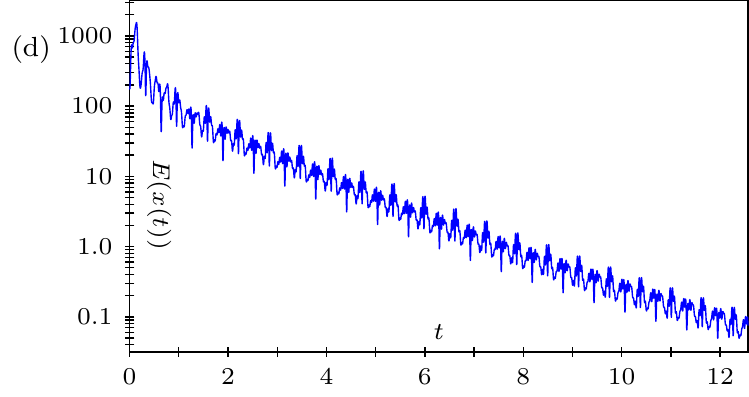}}\end{matrix}$
\caption{(a): Formation shape control of a four-agent system under the distance-only control law in \Cref{sec:controlLaw}. The initial formation is indicated by dotted lines and the finial formation is indicated by dashed lines. (b): Logarithmic plot of the total energy during the simulation. (c), (d): Same situation as in~(a), (b) but the parameters $\rho_i$ in~\cref{eq:controlLaw} are set to zero.}
\label{fig:simulations0102}
\end{figure*}

Our next goal is to reduce the amount of sensed information from relative position vectors~$p_j-p_i$ to the scalar distances $\|p_j-p_i\|$ with $ij\in\mathcal{E}$. For each $i\in\{1,\ldots,N\}$, we choose again the potential function $V_i$ that is given by~\cref{eq:localPotential}. Note agent~$i$ can compute the value of~$V_i$ at any $p\in\mathbb{R}^{nN}$ from measurements of the distances $\|p_j-p_i\|$ with $ij\in\mathcal{E}$. Next, choose~$nN$ pairwise distinct positive real numbers~$\omega_{i,k}$ for $i\in\{1,\ldots,N\}$ and $k\in\{1,\ldots,n\}$. Moreover, for every real number~$\omega>0$, define~$nN$ sinusoids $u^{\omega}_{i,k}\colon\mathbb{R}\to\mathbb{R}$ by
\begin{equation}\label{eq:sinusoids}
u^{\omega}_{i,k}(t) \ := \ 2\,\omega\,\omega_{i,k}\,\cos(\omega\,\omega_{i,k}\,t + \varphi_{i,k}),
\end{equation}
where each $\varphi_{i,k}\in\mathbb{R}$ is an arbitrary but fixed phase shift. Choose arbitrary positive real numbers $r_1,\ldots,r_N$ and $\rho_1,\ldots,\rho_N$. We propose the control law
\begin{equation}\label{eq:controlLaw}
a_{i,k} \ = \ -r_i\,\langle\!\langle{v_i,b_{i,k}}\rangle\!\rangle + u^{\omega}_{i,k}(t)\,\sqrt{\frac{\rho_i}{\omega} + V_i(p)}
\end{equation}
for~\cref{eq:dynamicPoints}, where the real-valued parameter $\omega>0$ has to be chosen sufficiently large (see Theorem~\ref{thm:mainResult} below). By inserting~\cref{eq:controlLaw} into~\cref{eq:dynamicPoints}, we obtain the closed-loop system
\begin{subequations}\label{eq:closedLoopSystem}
\begin{align}
\dot{p}_i & \ = \ v_i, \qquad i = 1,\ldots,N, \\
\dot{v}_i & \ = \ -r_i\,v_i + \sqrt{\frac{\rho_i}{\omega} + V_i(p)}\,u^{\omega}_{i}(t),
\end{align}
\end{subequations}
where the $u^{\omega}_{i}\colon\mathbb{R}\to\mathbb{R}^n$ are defined by
\[
u^{\omega}_{i}(t) \ := \ \sum_{k=1}^{n}u^{\omega}_{i,k}(t)\,b_{i,k}.
\]
\begin{theorem}\label{thm:mainResult}
Suppose that for every point~$p$ of~\cref{eq:setOfDesiredFormations}, the framework~$\mathcal{G}(p)$ is infinitesimally rigid. Then, there exist $\omega_0,\rho,L,\lambda,\mu>0$ such that for every~$\omega\geq\omega_0$, every~$t_0\in\mathbb{R}$, and every~$x_0\in\mathbb{R}^{2nN}$ with~$E(x_0)\leq{L}$, the maximal solution~$x$ of~\cref{eq:closedLoopSystem} with initial condition~$x(t_0)=x_0$ satisfies
\[
E(x(t)) \ \leq \ \sqrt{\frac{\rho}{\omega}} + \lambda\,E(x_0)\,\mathrm{e}^{-\mu(t-t_0)}
\]
for every~$t\geq{t_0}$.
\end{theorem}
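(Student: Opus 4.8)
The plan is to regard the closed-loop system~\cref{eq:closedLoopSystem} as a highly oscillatory system and to apply the averaging technique for mechanical control systems from~\cite{bullo2002averaging}. The sinusoids~\cref{eq:sinusoids} have amplitude of order~$\omega$, but their primitives are of order one; this is exactly the regime in which a high-frequency forcing of a second-order system produces a nontrivial averaged drift governed by the symmetric products of the input vector fields~\cite{BulloBook}. First I would rewrite~\cref{eq:closedLoopSystem} in the standard form required by the averaging theorem, isolating the slow drift~$-r_i v_i$ from the fast forcing~$\sqrt{\rho_i/\omega+V_i(p)}\,u_i^{\omega}(t)$, and pass to the fast time scale.

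The decisive step is to compute the averaged system and to recognize that it coincides with the gradient system~\cref{eq:gradientSystem1}. The amplitude~$\sqrt{\rho_i/\omega+V_i(p)}$ is tailored precisely for this: its square~$\rho_i/\omega+V_i(p)$ has position-gradient~$\nabla_{p_i}V_i(p)=\nabla_{p_i}V(p)$, and the symmetric-product contribution of the oscillation depends on the amplitude only through its square, so the averaged force becomes exactly~$-\nabla_{p_i}V(p)$ while the constant offset~$\rho_i/\omega$ disappears under differentiation. The square root is what converts the squared amplitude back into the potential, and the numerical factors in~\cref{eq:sinusoids} are chosen so that the averaged force carries coefficient one. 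Because the frequencies~$\omega_{i,k}$ are pairwise distinct, all cross terms between distinct sinusoids average to zero, so that only the self-interaction terms survive and, summed over the orthonormal directions~$b_{i,k}$, reconstruct the full gradient~$\nabla_{p_i}V(p)$; the slow damping~$-r_iv_i$ is unaffected by the averaging. Thus the averaged system is~\cref{eq:gradientSystem1}, for which Lemma~\ref{thm:ChetaevTrick} and Theorem~\ref{thm:exponentialStability} already establish local exponential stability with respect to the total energy~$E$.

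It then remains to transfer this exponential stability of the averaged system into a practical exponential stability statement for the true system~\cref{eq:closedLoopSystem}. For this I would reuse the Lyapunov function~$E_{\varepsilon}$ from Lemma~\ref{thm:ChetaevTrick}. Since~$E_\varepsilon$ satisfies~$\gamma_0 E\le E_\varepsilon\le\gamma_1 E$ and~$\dot E_\varepsilon\le-\gamma_2 E_\varepsilon$ along the averaged flow on the sublevel set~$\{V(p)\le L\}$, the averaging estimates show that along solutions of~\cref{eq:closedLoopSystem} the quantity~$E_\varepsilon$ decays at essentially the same exponential rate, up to an error that is small for large~$\omega$ together with the contribution of the offset~$\rho_i/\omega$. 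A comparison argument then produces an invariant sublevel set and the desired bound~$E(x(t))\le\sqrt{\rho/\omega}+\lambda E(x_0)\,\mathrm{e}^{-\mu(t-t_0)}$ for all~$\omega\ge\omega_0$; the residual~$\sqrt{\rho/\omega}$ collects the order of the averaging approximation for this class of high-amplitude oscillatory systems together with the persistent fast-oscillation energy caused by the offset. Uniformity in~$t_0$ follows because the averaged system is autonomous, so the estimates do not depend on the initial time.

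The hard part will be the rigorous averaging analysis on an infinite time horizon, which is the content of the appendix. Averaging theorems by themselves only give closeness of trajectories on finite time intervals, so obtaining a uniform-in-time bound requires using the exponential stability of the averaged system, through~$E_\varepsilon$, to keep the accumulated error from growing, and doing so uniformly over all initial data with~$E(x_0)\le L$. A further subtlety is that the lower bound~$\rho_i/\omega$ on the squared amplitude degenerates as~$\omega\to\infty$, so the derivative bounds of the amplitude~$\sqrt{\rho_i/\omega+V_i}$, and hence the averaging error, must be tracked carefully to ensure that they remain controlled for large~$\omega$; here the estimates of Proposition~\ref{thm:11112017} on the derivatives of~$V$ are the natural tool.
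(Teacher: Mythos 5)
Your proposal follows essentially the same route as the paper's appendix: symmetric-product averaging in the sense of~\cite{bullo2002averaging} identifying the gradient system~\cref{eq:gradientSystem2} as the averaged system, followed by a Lyapunov/comparison argument with the Chetaev function~$E_{\varepsilon}$ of Lemma~\ref{thm:ChetaevTrick} run directly along solutions of the oscillatory system, precisely to overcome the finite-horizon limitation of off-the-shelf averaging theorems that you flag. The paper implements this via an explicit velocity (L-)transformation~\cref{eq:transformedTrajectory} and integration by parts on~$E_{\varepsilon}$, and the degeneration you worry about (the offset~$\rho_i/\omega$ making second derivatives of the amplitude grow like~$\sqrt{\omega}$, cf.~\cref{eq:badEstimates:C}) is exactly what limits the error term to order~$1/\sqrt{\omega}$ and yields the residual~$\sqrt{\rho/\omega}$ -- so your plan anticipates the paper's argument in all essentials.
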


A detailed proof of Theorem~\ref{thm:mainResult} is given in the appendix.  At this point, we only indicate why control law~\cref{eq:controlLaw} leads to a decay of the total energy. For this purpose, we write the closed-loop system~\cref{eq:closedLoopSystem} in a suitable control-affine form under open-loop controls. In the first step, we introduce a suitable notation. Recall that, for every $i\in\{1,\ldots,N\}$, the directions $b_{i,1},\ldots,b_{i,n}\in\mathbb{R}^n$ in~\cref{eq:dynamicPoints} are assumed to form an orthonormal basis of~$\mathbb{R}^n$. For every $i\in\{1,\ldots,N\}$ and every $k\in\{1,\ldots,n\}$, define
\[
B_{i,k} \ := \ (0^{\top},\ldots,0^{\top},b_{i,k}^{\top},0^{\top},\ldots,0^{\top})^{\top} \, \in \, \mathbb{R}^{nN},
\]
where~$b_{i,k}$ is at the~$i$th position. It is clear that the vectors~$B_{i,k}$ form an orthonormal basis of~$\mathbb{R}^{nN}$. As an abbreviation, we define an indexing set~$\Lambda$ to be the set of all pairs~$(i,k)$ with $i\in\{1,\ldots,N\}$ and $k\in\{1,\ldots,n\}$. Let $R\in\mathbb{R}^{nN\times{nN}}$ be the diagonal matrix in~\cref{eq:dissipativeForce}. For every $\omega>0$ and every $\ell=(i,k)\in\Lambda$, define a smooth vector field~$f^{\omega}_{\ell}$ on~$\mathbb{R}^{nN}$ by
\begin{equation}\label{eq:newVectorFields}
f^{\omega}_{\ell}(p) \ := \ \sqrt{\frac{\rho_i}{\omega} + V_i(p)}\,B_{\ell}.
\end{equation}
Now we can write the closed-loop system~\cref{eq:closedLoopSystem} equivalently as the second-order control-affine system
\begin{subequations}\label{eq:controlAffineForm}
\begin{align}
\dot{p} & \ = \ v, \label{eq:controlAffineForm:A} \\
\dot{v} & \ = \ -R\,v + \sum_{\ell\in\Lambda}u^{\omega}_{\ell}(t)\,f^{\omega}_{\ell}(p) \label{eq:controlAffineForm:B}
\end{align}
\end{subequations}
with \emph{dissipative force}~$-R\,v$, \emph{open-loop controls}~$u^{\omega}_{\ell}$ and \emph{control vector fields}~$f^{\omega}_{\ell}$, cf.~\cite{BulloBook}.

It is known from~\cite{bullo2002averaging} that the trajectories of a second-order control-affine system of the form~\cref{eq:controlAffineForm} approximate the trajectories of a certain averaged system if the frequency parameter $\omega$ is sufficiently large. The averaged system contains so-called \emph{symmetric products} of the control vector fields (cf.~\cite{bullo2002averaging,BulloBook,Crouch1981}). For our purposes, we only need the following particular case of the symmetric product. For every $\omega>0$ and every $\ell\in\Lambda$, the symmetric product of $f^{\omega}_{\ell}$ with itself is the vector field~$\langle{f^{\omega}_{\ell}\colon\!f^{\omega}_{\ell}}\rangle$ on~$\mathbb{R}^{nN}$ that is defined by
\begin{equation}\label{eq:symmetricProduct}
\langle{f^{\omega}_{\ell}\colon\!f^{\omega}_{\ell}}\rangle(p) \ := \ 2\,\operatorname{D}\!f^{\omega}_{\ell}(p)f^{\omega}_{\ell}(p),
\end{equation}
where $\operatorname{D}\!f^{\omega}_{\ell}(p)$ denotes the derivative of~$f^{\omega}_{\ell}$ at~$p$. A direct computation, using $\nabla_{p_i}V(p)=\nabla_{p_i}V_i(p)$ for every $i\in\{1,\ldots,N\}$, shows that
\begin{equation}\label{eq:symmetricProductGradient}
\sum_{\ell\in\Lambda}\langle{f^{\omega}_{\ell}\colon\!f^{\omega}_{\ell}}\rangle(p) \ = \ \nabla{V}(p)
\end{equation}
for every $p\in\mathbb{R}^{nN}$. By using the averaging techniques from~\cite{bullo2002averaging}, we show in the appendix that the trajectories of~\cref{eq:controlAffineForm} approximate the trajectories of the \emph{averaged system}
\begin{subequations}\label{eq:averagedSystem}
\begin{align}
\dot{p} & \ = \ v, \label{eq:averagedSystem:A} \\
\dot{v} & \ = \ -R\,v - \sum_{\ell\in\Lambda}\langle{f^{\omega}_{\ell}\colon\!f^{\omega}_{\ell}}\rangle(p) \label{eq:averagedSystem:B}
\end{align}
\end{subequations}
for sufficiently large~$\omega>0$. Note that because of~\cref{eq:symmetricProductGradient}, the averaged system~\cref{eq:averagedSystem} coincides with the gradient system~\cref{eq:gradientSystem2}. Moreover, we know from Theorem~\ref{thm:exponentialStability} that the set~\cref{eq:setOfDesiredFormations} of desired formations is locally exponentially stable for~\cref{eq:averagedSystem} with respect to the total energy. By utilizing the exponential stability for~\cref{eq:averagedSystem} and the approximation property for sufficiently large~$\omega$, it is then possible to conclude practical exponential stability for~\cref{eq:controlAffineForm} as it is stated in Theorem~\ref{thm:mainResult}.

On a more intuitive level, one can say that the sinusoidal perturbations in~\cref{eq:controlLaw} allow the agents to explore changes of their local potential functions in a small neighborhood of their current position. In this way they can gather gradient information, which in turn allows an approximation of the gradient-based control~\cref{eq:gradientControlLaw}. The oscillations are required to compensate the reduced amount of information of distance-only measurements.

\section{Simulation examples}\label{sec:simulations}
In this section, we provide simulation results to demonstrate the behavior of~\cref{eq:dynamicPoints} under the distance-only control law~\cref{eq:controlLaw} and to allow a comparison with the behavior of~\cref{eq:dynamicPoints} under the gradient-based control law~\cref{eq:gradientControlLaw}. We consider a system of $N=4$ double-integrator agents in the Euclidean space of dimension $n=2$. For each $i\in\{1,\ldots,4\}$, the coordinate frame of agent~$i$ in~\cref{eq:dynamicPoints} is given by $b_{i,1}=(\cos\phi_i,\sin\phi_i)$ and $b_{i,2}=(-\sin\phi_i,\cos\phi_i)$, where $\phi_i=i\pi/3$; cf.~\Cref{fig:agentSketch}~\hyperlink{Fig1a}{(a)}. We let~$\mathcal{G}$ be the complete graph of~$4$ vertices. This means that each agent can measure the distances to all other members of the team. The common goal of the agents is to reach a rectangular formation with desired distances $d_{12}=d_{34}=3$, $d_{23}=d_{14}=4$, and $d_{13}=d_{24}=5$. By checking the rank condition in Definition~\ref{def:infinitesimalRigidity}, one can verify that, for every point~$p$ of~\cref{eq:setOfDesiredFormations}, the framework $\mathcal{G}(p)$ is infinitesimally rigid. The initial positions are given by $p_1(0)=(0,0)^{\top}$, $p_2(0)=(-1,4)^{\top}$, $p_3(0)=(5,3)^{\top}$, and $p_4(0)=(3,0)^{\top}$. We suppose that the agents rest at the beginning, i.e., their initial velocities are given by $v_i(0)=(0,0)^{\top}$ for every $i\in\{1,\ldots,4\}$. We choose the damping constants $r_i=50$ for every $i\in\{1,\ldots,4\}$. By Theorem~\ref{thm:exponentialStability}, we can expect exponential decay of the total energy under the gradient-based control law~\cref{eq:gradientControlLaw}. This can be verified in the parts~\hyperlink{Fig1b}{(b)} and~\hyperlink{Fig1b}{(c)} of \Cref{fig:agentSketch}.

We also provide data for the case of the distance-only control law~\cref{eq:controlLaw}. For the sinusoids in~\cref{eq:sinusoids}, we choose the frequency coefficients~$\omega_{i,k}=2(i-1)+k$ and the phase shifts~$\varphi_{i,k}=-\pi/2$ for every $i\in\{1,\ldots,4\}$ and every $k\in\{1,2\}$. We choose the offsets $\rho_i=1$ for every $i\in\{1,\ldots,4\}$. The trajectories for $\omega=10$ are shown in parts~\hyperlink{Fig2a}{(a)} and~\hyperlink{Fig2b}{(b)} of \Cref{fig:simulations0102}. We emphasize that an application of Theorem~\ref{thm:mainResult} requires positive offsets $\rho_i$ in~\cref{eq:controlLaw} because the proof of Theorem~\ref{thm:mainResult} exploits a sufficient degree of smoothness. However, it turns out that the performance of control law~\cref{eq:controlLaw} is even better for the nonsmooth case when the $\rho_i$ are all equal to zero. \Cref{fig:simulations0102}~\hyperlink{Fig2d}{(d)} indicates exponential stability. Note that the square root of a nonnegative smooth function is always locally Lipschitz continuous. Thus, even if the $\rho_i$ are all equal to zero, we still have existence and uniqueness of solutions for~\cref{eq:dynamicPoints} under the distance-only control law~\cref{eq:controlLaw}.

\section{Conclusions} \label{sec:conclusions}
In this paper we considered distance-based formation control systems modelled by second-order dynamics for achieving a rigid target shape, and we established their local exponential stability. For the standard gradient-based formation system, by employing the Chetaev's trick we present an explicit analysis of the local exponential stability. We also proposed a distance-only formation control law for stabilizing second-order formation systems. We show by means of a suitable averaging analysis that the trajectories of the second-order distance-only formation system approximate the trajectories of the gradient system. Practical exponential stability is proved for the second-order formation system with distance-only measurements.

\bibliographystyle{ieeetr}
\bibliography{bibFile}

\renewcommand*{\thetheorem}{\Alph{theorem}}
\section*{Appendix: Practical stability analysis}\label{sec:proof}
It remains to prove Theorem~\ref{thm:mainResult}.  We continue in the notation of \Cref{sec:controlLaw}.
\subsection{Velocity transformation}
For every $\omega>0$ and all $\ell,\ell'\in\Lambda$, define sinusoidal functions $U^{\omega}_{\ell},U^{\omega}_{\ell,\ell'}\colon\mathbb{R}\to\mathbb{R}$ by
\begin{align*}
U^{\omega}_{\ell}(t) & \ := \ 2\,\sin(\omega\,\omega_{\ell}\,t + \varphi_{\ell}), \allowdisplaybreaks \\
U^{\omega}_{\ell,\ell'}(t) & \ := \ U^{\omega}_{\ell}(t)\,U^{\omega}_{\ell'}(t) - 2\,\delta_{\ell,\ell'},
\end{align*}
where $\delta_{\ell,\ell'}$ denotes the Kronecker delta of $\ell,\ell'$. Note that each of the $U^{\omega}_{\ell},U^{\omega}_{\ell,\ell'}$ is periodic with zero mean. Moreover, the function~$U^{\omega}_{\ell}$ is an antiderivative of~$u^{\omega}_{\ell}$. For the averaging procedure in the next subsection, it turns out to be convenient to apply the smooth change of variables
\begin{equation}\label{eq:transformedTrajectory}
\tilde{v} \ = \ v - \sum_{\ell\in\Lambda}U^{\omega}_{\ell}(t)\,f^{\omega}_{\ell}(p)
\end{equation}
to the velocities. This change of variables is sometimes referred to as an \emph{L-transformation} (e.g. in~\cite{Baillieul1993}). It is easy to check that, in the variables~\cref{eq:transformedTrajectory}, system~\cref{eq:controlAffineForm} can be written equivalently as
\begin{subequations}\label{eq:transformedControlAffineForm}
\begin{align}
& \dot{p} \ = \ \tilde{v} +  \sum_{\ell\in\Lambda}U^{\omega}_{\ell}(t)\,f^{\omega}_{\ell}(p), \allowdisplaybreaks \\
& \dot{\tilde{v}} \ = \ - R\,\tilde{v} - \nabla{V}(p) - \sum_{\ell\in\Lambda}U^{\omega}_{\ell}(t)\,R\,f^{\omega}_{\ell}(p) \allowdisplaybreaks \\
& \quad - \sum_{\ell\in\Lambda}U^{\omega}_{\ell}(t)\,\operatorname{D}\!f^{\omega}_{\ell}(p)\tilde{v} - \!\sum_{\ell,\ell'\in\Lambda}\!U^{\omega}_{\ell,\ell'}(t)\,\operatorname{D}\!f^{\omega}_{\ell}(p)f^{\omega}_{\ell'}(p),\nonumber
\end{align}
\end{subequations}
where we have used~\cref{eq:symmetricProduct,eq:symmetricProductGradient}. The structure of~\cref{eq:transformedControlAffineForm} already indicates that the averaged equation coincides with the gradient system~\cref{eq:gradientSystem2}. For later references, we state the following result, which quantifies how the change of coordinates affects the value of the total energy~$E$.
\begin{lemma}\label{thm:transformedEnergy}
Let $\rho_{\text{max}}$ denote the maximum of $\rho_{1},\ldots,\rho_{N}$. There exist $\kappa_1,\kappa_2>0$ such that
\begin{align*}
E(\tilde{x}) & \ \leq \ \kappa_1\Big(\frac{\rho_{\text{max}}}{\omega}+E(x)\Big), \\
E(x) & \ \leq \ \kappa_2\Big(\frac{\rho_{\text{max}}}{\omega}+E(\tilde{x})\Big)
\end{align*}
for every $t\in\mathbb{R}$ and every $x=(p^{\top},v^{\top})^{\top}\in\mathbb{R}^{2nN}$, where $\tilde{x}=(p^{\top},\tilde{v}^{\top})^{\top}$ with~$\tilde{v}$ given by~\cref{eq:transformedTrajectory}.
\end{lemma}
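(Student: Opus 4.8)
The plan is to exploit the fact that the $L$-transformation~\cref{eq:transformedTrajectory} changes only the velocity variable and leaves the position~$p$ untouched. Since the potential energy~$V(p)$ in~\cref{eq:potentialEnergy} depends on~$p$ alone, it takes the same value for~$x$ and~$\tilde{x}$, and therefore both estimates reduce to comparing the kinetic energies $\tfrac{1}{2}\|v\|^2$ and $\tfrac{1}{2}\|\tilde{v}\|^2$. Writing $g(t,p):=\sum_{\ell\in\Lambda}U^{\omega}_{\ell}(t)\,f^{\omega}_{\ell}(p)$ for the perturbation term, we have $\tilde{v}=v-g$ and hence $v=\tilde{v}+g$, so the whole task amounts to controlling $\|g\|^2$ uniformly in~$t$ by the available energy plus a term of order $1/\omega$.

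First I would compute $\|g\|^2$ explicitly. Because the~$B_{\ell}$ form an orthonormal basis of~$\mathbb{R}^{nN}$ and $f^{\omega}_{\ell}(p)=\sqrt{\rho_i/\omega+V_i(p)}\,B_{\ell}$ for $\ell=(i,k)$, the cross terms vanish and $\|g\|^2=\sum_{\ell=(i,k)}(U^{\omega}_{\ell}(t))^2\,(\rho_i/\omega+V_i(p))$. Using the uniform bound $|U^{\omega}_{\ell}(t)|\leq2$ and summing the $n$ values of~$k$ attached to each~$i$, this gives $\|g\|^2\leq 4n\sum_{i=1}^{N}(\rho_i/\omega+V_i(p))$. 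The key algebraic observation is that each edge $ij\in\mathcal{E}$ contributes to exactly the two local potentials~$V_i$ and~$V_j$, so that $\sum_{i=1}^{N}V_i(p)=2V(p)$; together with $\sum_{i=1}^{N}\rho_i\leq N\rho_{\text{max}}$ this yields a bound of the form $\|g\|^2\leq c_1\,\rho_{\text{max}}/\omega+c_2\,V(p)$ with constants $c_1,c_2>0$ depending only on~$n$ and~$N$.

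Finally I would combine this with the elementary inequality $\|a+b\|^2\leq2\|a\|^2+2\|b\|^2$. For the first estimate, $\tfrac{1}{2}\|\tilde{v}\|^2\leq\|v\|^2+\|g\|^2=2\,T(v)+\|g\|^2$, so $E(\tilde{x})\leq 2\,T(v)+V(p)+\|g\|^2\leq2\,E(x)+\|g\|^2$; the crucial point is that $V(p)\leq E(x)$, which lets me replace the $V(p)$ in the bound on $\|g\|^2$ by $E(x)$ and collect all constants into a single $\kappa_1$. The second estimate is entirely symmetric: writing $v=\tilde{v}+g$ and using $V(p)\leq E(\tilde{x})$ produces the constant~$\kappa_2$. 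The computation is routine once the structure is in place, and the only points requiring a little care are the double-counting identity $\sum_i V_i=2V$ and the observation that the \emph{same} quantity $V(p)$ bounding $\|g\|^2$ is dominated by \emph{both} energies $E(x)$ and $E(\tilde{x})$, which is exactly what makes the two-sided comparison go through.
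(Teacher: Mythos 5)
Your proof is correct and takes exactly the route the paper intends: the paper's own proof of this lemma is just a one-line remark that the estimates ``follow easily'' from the definitions of $E$, the change of variables, and the vector fields $f^{\omega}_{\ell}$, and your argument---orthonormality of the $B_{\ell}$, the bound $|U^{\omega}_{\ell}(t)|\leq 2$, the double-counting identity $\sum_{i}V_i=2V$, and the inequality $\|a+b\|^2\leq 2\|a\|^2+2\|b\|^2$---is precisely the direct verification that fills in those omitted details, with constants $\kappa_1,\kappa_2$ depending only on $n$ and $N$ as required. No gaps.
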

\begin{proof}
The estimates follow easily from the definition of the total energy~$E$ in~\cref{eq:totalEnergy} and the change of coordinates~\cref{eq:transformedTrajectory} with the vector fields~$f^{\omega}_{\ell}$ as in~\cref{eq:newVectorFields}.
\end{proof}

\subsection{Averaging}\label{sec:averaging}
We want to apply a modified version of Chetaev's trick to system~\cref{eq:transformedControlAffineForm}. The subsequent averaging result (Proposition~\ref{thm:averaging}) extracts the behavior of the solutions of~\cref{eq:transformedControlAffineForm} in the high-frequency limit. If $g$ is a real-valued function on an interval~$I$, then we write $[g(t)]_{t=t_1}^{t=t_2}:=g(t_2) - g(t_1)$ for $t_1,t_2\in{I}$.
\begin{proposition}\label{thm:averaging}
For fixed $\varepsilon\geq0$, let $E_{\varepsilon}$ and $\dot{E}_{\varepsilon}$ be defined by~\cref{eq:ChetaevTrick} and~\cref{eq:ChetaevTrickDerivative}, respectively. Let $L>0$. Then, there exist constants $\eta_{1},\eta_{2}>0$ and, for every $\omega>0$, smooth real-valued functions $D^{\omega}_1E_{\varepsilon},D^{\omega}_2E_{\varepsilon}$ on $\mathbb{R}^{2nN}$ such that, for every solution~$\tilde{x}\colon{I}\to\mathbb{R}^{2nN}$ of~\cref{eq:transformedControlAffineForm} and all $t_1,t_2\in{I}$, we have
\begin{subequations}\label{eq:integralExpansion}
\begin{align}
& E_{\varepsilon}(\tilde{x}(t_2)) \ = \ E_{\varepsilon}(\tilde{x}(t_1)) + \int_{t_1}^{t_2}\dot{E}_{\varepsilon}(\tilde{x}(t))\,\mathrm{d}t \label{eq:integralExpansion:A} \allowdisplaybreaks \\
& - \big[(D^{\omega}_1E_{\varepsilon})(t,\tilde{x}(t))\big]_{t=t_1}^{t=t_2} + \int_{t_1}^{t_2}(D^{\omega}_2E_{\varepsilon})(t,\tilde{x}(t))\,\mathrm{d}t, \label{eq:integralExpansion:B}
\end{align}
\end{subequations}
and, additionally, the estimates
\[
\big|(D^{\omega}_1E_{\varepsilon})(t,x)\big| \ \leq \ \frac{\eta_1}{\omega} \quad \text{and} \quad \big|(D^{\omega}_2E_{\varepsilon})(t,x)\big| \ \leq \ \frac{\eta_2}{\sqrt{\omega}}
\]
hold for every $\omega\geq1$, every $t\in\mathbb{R}$, and every $x\in\mathbb{R}^{2nN}$ with $E_0(x)\leq{L}$.
\end{proposition}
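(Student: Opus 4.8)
The plan is to differentiate $E_{\varepsilon}$ directly along the transformed dynamics \cref{eq:transformedControlAffineForm} and split the result into a mean part and a purely oscillatory remainder. Writing the right-hand side of \cref{eq:transformedControlAffineForm} as $F^{\omega}(t,\tilde{x}) = F_0(\tilde{x}) + \sum_{\ell\in\Lambda}U^{\omega}_{\ell}(t)\,F^{\omega}_{\ell}(\tilde{x}) + \sum_{\ell,\ell'\in\Lambda}U^{\omega}_{\ell,\ell'}(t)\,G^{\omega}_{\ell,\ell'}(p)$, where $F_0$ is the $\omega$-independent drift $(\tilde{v},-R\tilde{v}-\nabla{V}(p))$, one checks that $\operatorname{D}\!E_{\varepsilon}(\tilde{x})F_0(\tilde{x})$ is exactly the function $\dot{E}_{\varepsilon}$ of \cref{eq:ChetaevTrickDerivative}, since the drift of \cref{eq:transformedControlAffineForm} coincides with the gradient system \cref{eq:gradientSystem2}. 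Hence along any solution $\tfrac{\mathrm{d}}{\mathrm{d}t}E_{\varepsilon}(\tilde{x}(t)) = \dot{E}_{\varepsilon}(\tilde{x}(t)) + \sum_{\ell}U^{\omega}_{\ell}(t)\,a_{\ell}(\tilde{x}(t)) + \sum_{\ell,\ell'}U^{\omega}_{\ell,\ell'}(t)\,b_{\ell,\ell'}(\tilde{x}(t))$ with smooth (and $\omega$-dependent) coefficients $a_{\ell}:=\operatorname{D}\!E_{\varepsilon}\,F^{\omega}_{\ell}$ and $b_{\ell,\ell'}:=\operatorname{D}\!E_{\varepsilon}\,G^{\omega}_{\ell,\ell'}$. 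Integrating from $t_1$ to $t_2$ reproduces the term $\int\dot{E}_{\varepsilon}$ in \cref{eq:integralExpansion:A}, so the task reduces to controlling the two oscillatory integrals.

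The key mechanism is a single integration by parts exploiting the zero-mean property of the sinusoids. Each $U^{\omega}_{\ell}$ and $U^{\omega}_{\ell,\ell'}$ is periodic with zero mean, hence admits a bounded antiderivative $W^{\omega}_{\ell},W^{\omega}_{\ell,\ell'}$ built from terms like $\cos(\omega\,\omega_{\ell}t+\cdots)/(\omega\,\omega_{\ell})$, so that $|W^{\omega}_{\ell}|,|W^{\omega}_{\ell,\ell'}|=O(1/\omega)$; here the pairwise distinctness of the $\omega_{i,k}$ ensures that $U^{\omega}_{\ell}U^{\omega}_{\ell'}$ carries no constant Fourier mode other than the one removed by the $-2\delta_{\ell,\ell'}$ correction. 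Integrating $U^{\omega}_{\ell}a_{\ell}(\tilde{x})$ and $U^{\omega}_{\ell,\ell'}b_{\ell,\ell'}(\tilde{x})$ by parts produces boundary terms, which I collect with the appropriate sign into $(D^{\omega}_1E_{\varepsilon})(t,x):=-\sum_{\ell}W^{\omega}_{\ell}(t)a_{\ell}(x)-\sum_{\ell,\ell'}W^{\omega}_{\ell,\ell'}(t)b_{\ell,\ell'}(x)$, and leftover integrands $-W^{\omega}_{\ell}(t)\operatorname{D}\!a_{\ell}(x)F^{\omega}(t,x)$ together with their $b$-analogues, collected into $(D^{\omega}_2E_{\varepsilon})(t,x)$. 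Both are smooth functions of $(t,x)$ on $\mathbb{R}^{2nN}$, and along a solution the chain rule turns $\tfrac{\mathrm{d}}{\mathrm{d}t}a_{\ell}(\tilde{x}(t))$ into $\operatorname{D}\!a_{\ell}(\tilde{x}(t))F^{\omega}(t,\tilde{x}(t))$, so \cref{eq:integralExpansion} holds as an exact identity for every solution, with no sublevel-set restriction needed.

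It remains to prove the two size estimates on $\{E_0\leq L\}$, and this is where the offsets $\rho_i>0$ and Proposition~\ref{thm:11112017} enter. The decisive point is that the first derivative of $f^{\omega}_{\ell}(p)=\sqrt{\rho_i/\omega+V_i(p)}\,B_{\ell}$ stays bounded uniformly in $\omega\geq1$: it scales like $\|\nabla{V_i}\|/\sqrt{\rho_i/\omega+V_i}\leq\|\nabla{V_i}\|/\sqrt{V_i}$, and since $V_i$ has the same structure as $V$, the bound $\|\nabla{V_i}\|^2\leq\alpha\,V_i$ of Proposition~\ref{thm:11112017} cancels the apparent $1/\sqrt{V_i}$ singularity. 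Consequently $a_{\ell},b_{\ell,\ell'}$ — involving only $f^{\omega}_{\ell}$, $\operatorname{D}\!f^{\omega}_{\ell}$, $\nabla{V}$, $\nabla^2V$, and the velocity $\tilde{v}$ (bounded on $\{E_0\leq L\}$) — are bounded uniformly in $\omega$, giving $|D^{\omega}_1E_{\varepsilon}|=O(1/\omega)\leq\eta_1/\omega$. The second derivative $\operatorname{D}^2\!f^{\omega}_{\ell}$, however, carries a factor $1/(\rho_i/\omega+V_i)^{3/2}$ bounded only by $O(\sqrt{\omega})$, the blow-up being capped at exactly $\sqrt{\omega}$ because $\rho_i>0$ keeps the radicand at least $\rho_i/\omega$. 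Since $\operatorname{D}\!a_{\ell}F^{\omega}$ and $\operatorname{D}\!b_{\ell,\ell'}F^{\omega}$ inherit this $O(\sqrt{\omega})$ growth, multiplying by $W^{\omega}=O(1/\omega)$ yields $|D^{\omega}_2E_{\varepsilon}|=O(\sqrt{\omega})/\omega=O(1/\sqrt{\omega})\leq\eta_2/\sqrt{\omega}$.

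The main obstacle is precisely this bookkeeping of the $\omega$-dependence. Because $V$ is invariant under translations and rotations, the set $\{E_0\leq L\}$ is unbounded in $p$, so the estimates cannot rest on naive compactness and must instead be drawn uniformly from Proposition~\ref{thm:11112017}; one must verify carefully that the first derivative of $f^{\omega}_{\ell}$ remains $O(1)$ while the second is only $O(\sqrt{\omega})$, so that no term degrades the claimed orders $1/\omega$ and $1/\sqrt{\omega}$. The positive offsets $\rho_i$ play a double role that should be made explicit: they render $f^{\omega}_{\ell}$ smooth (the square root is only Lipschitz at $V_i=0$) and they cap the growth of $\operatorname{D}^2\!f^{\omega}_{\ell}$ at $\sqrt{\omega}$ rather than permitting an uncontrolled singularity, which is ultimately what fixes the $1/\sqrt{\omega}$ rate in the second estimate.
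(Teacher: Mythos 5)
Your proposal is correct and follows essentially the same route as the paper's proof: expand $E_{\varepsilon}$ along solutions of \cref{eq:transformedControlAffineForm}, isolate the drift contribution $\dot{E}_{\varepsilon}$, integrate the zero-mean oscillatory terms by parts using $O(1/\omega)$ antiderivatives (boundary terms forming $D^{\omega}_1E_{\varepsilon}$, leftover integrands forming $D^{\omega}_2E_{\varepsilon}$), and then bound everything via Proposition~\ref{thm:11112017}, with the two crucial observations that $\|\nabla V_i\|^2\leq\alpha_1 V_i$ keeps $\operatorname{D}\!f^{\omega}_{\ell}$ bounded uniformly in $\omega$ while the $O(\sqrt{\omega})$ growth of $\operatorname{D}^2\!f^{\omega}_{\ell}$ enters only $D^{\omega}_2E_{\varepsilon}$. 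The only difference is organizational --- the paper applies the fundamental theorem of calculus separately to $T$, $V$, and $\langle\!\langle\nabla V,\tilde{v}\rangle\!\rangle$ rather than differentiating $E_{\varepsilon}$ as a whole and grouping by sinusoidal coefficient --- which is immaterial.
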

\begin{proof}
Let $\tilde{x}=(p^{\top},\tilde{v}^{\top})^{\top}\colon{I}\to\mathbb{R}^{2nN}$ be a solution of~\cref{eq:transformedControlAffineForm} and let $t_1,t_2\in{I}$. In the following, sums of the form $\sum_{\ell}$ are meant to run over all elements of $\Lambda$. Using the fundamental theorem of analysis and the defining differential equation~\cref{eq:transformedControlAffineForm} of $\tilde{x}$, the kinetic energy~\cref{eq:kineticEnergy} can be written as
\begin{subequations}\label{eq:kineticEnergyIntegral}
\begin{align}
& T(\tilde{v}(t_2)) \ = \ T(\tilde{v}(t_1)) - \int_{t_1}^{t_2}\langle\!\langle\tilde{v}(t),R\,\tilde{v}(t)\rangle\!\rangle\,\mathrm{d}t \label{eq:kineticEnergyIntegral:A} \allowdisplaybreaks \\
& - \int_{t_1}^{t_2}\langle\!\langle\tilde{v}(t),\nabla{V}(p(t))\rangle\!\rangle\,\mathrm{d}t \label{eq:kineticEnergyIntegral:B} \allowdisplaybreaks \\
& - \sum_{\ell}\int_{t_1}^{t_2}U^{\omega}_{\ell}(t)\,\langle\!\langle\tilde{v}(t),R\,f^{\omega}_{\ell}(p(t))\rangle\!\rangle\,\mathrm{d}t \label{eq:kineticEnergyIntegral:C} \allowdisplaybreaks \\
& - \sum_{\ell}\int_{t_1}^{t_2}U^{\omega}_{\ell}(t)\,\langle\!\langle\tilde{v}(t),\operatorname{D}\!f^{\omega}_{\ell}(p(t))\tilde{v}(t)\rangle\!\rangle\,\mathrm{d}t \label{eq:kineticEnergyIntegral:D} \allowdisplaybreaks \\
& - \sum_{\ell,\ell'}\int_{t_1}^{t_2}U^{\omega}_{\ell,\ell'}(t)\,\langle\!\langle\tilde{v}(t),\operatorname{D}\!f^{\omega}_{\ell}(p(t))f^{\omega}_{\ell'}(p(t))\rangle\!\rangle\,\mathrm{d}t, \label{eq:kineticEnergyIntegral:E}
\end{align}
\end{subequations}
the potential energy~\cref{eq:potentialEnergy} can be written as
\begin{subequations}\label{eq:potentialEnergyIntegral}
\begin{align}
& V(p(t_2)) \ = \ V(p(t_1)) + \int_{t_1}^{t_2}\langle\!\langle\nabla{V}(p(t)),\tilde{v}(t)\rangle\!\rangle\,\mathrm{d}t \label{eq:potentialEnergyIntegral:A} \allowdisplaybreaks \\
& \qquad + \sum_{\ell}\int_{t_1}^{t_2}U^{\omega}_{\ell}(t)\,\langle\!\langle\nabla{V}(p(t)),f^{\omega}_{\ell}(p(t))\rangle\!\rangle\,\mathrm{d}t, \label{eq:potentialEnergyIntegral:B}
\end{align}
\end{subequations}
and the inner product of the gradient of~$V$ and the velocity can be written as
\begin{subequations}\label{eq:mixedTermIntegral}
\begin{align}
& \langle\!\langle\nabla{V}(p(t_2)),\tilde{v}(t_2)\rangle\!\rangle \ = \ \langle\!\langle\nabla{V}(p(t_1)),\tilde{v}(t_1)\rangle\!\rangle \label{eq:mixedTermIntegral:A} \allowdisplaybreaks \\
& + \int_{t_1}^{t_2}\langle\!\langle\nabla^2{V}(p(t))\tilde{v}(t),\tilde{v}(t)\rangle\!\rangle\,\mathrm{d}t \label{eq:mixedTermIntegral:B} \allowdisplaybreaks \\
& - \int_{t_1}^{t_2}\big(\langle\!\langle\nabla{V}(p(t)),R\,\tilde{v}(t)\rangle\!\rangle + \|\nabla{V}(p(t))\|^2\big)\mathrm{d}t \label{eq:mixedTermIntegral:C} \allowdisplaybreaks \\
& + \sum_{\ell}\int_{t_1}^{t_2}U^{\omega}_{\ell}(t)\,\langle\!\langle\nabla^2{V}(p(t))\,f^{\omega}_{\ell}(p(t)),\tilde{v}(t)\rangle\!\rangle\,\mathrm{d}t \label{eq:mixedTermIntegral:D} \allowdisplaybreaks \\
& - \sum_{\ell}\int_{t_1}^{t_2}U^{\omega}_{\ell}(t)\,\langle\!\langle\nabla{V}(p(t)),R\,f^{\omega}_{\ell}(p(t))\rangle\!\rangle\,\mathrm{d}t \label{eq:mixedTermIntegral:E} \allowdisplaybreaks \\
& - \sum_{\ell}\int_{t_1}^{t_2}U^{\omega}_{\ell}(t)\,\langle\!\langle\nabla{V}(p(t)),\operatorname{D}\!f^{\omega}_{\ell}(p(t))\tilde{v}(t)\rangle\!\rangle\,\mathrm{d}t \label{eq:mixedTermIntegral:F} \allowdisplaybreaks \\
& - \sum_{\ell,\ell'}\int_{t_1}^{t_2}\!U^{\omega}_{\ell,\ell'}(t)\,\langle\!\langle\nabla{V}(p(t)),\operatorname{D}\!f^{\omega}_{\ell}(p(t))f^{\omega}_{\ell'}(p(t))\rangle\!\rangle\,\mathrm{d}t. \label{eq:mixedTermIntegral:G}
\end{align}
\end{subequations}
The sum of~\cref{eq:kineticEnergyIntegral,eq:potentialEnergyIntegral} together with the $\varepsilon$-fold of \cref{eq:mixedTermIntegral} will lead to the asserted equation~\cref{eq:integralExpansion}. To see this, note that~\cref{eq:integralExpansion:A} originates from the sum of the contributions in~\cref{eq:kineticEnergyIntegral:A}, \cref{eq:kineticEnergyIntegral:B}, and \cref{eq:potentialEnergyIntegral:A} together with the $\varepsilon$-fold of the contributions in~\cref{eq:mixedTermIntegral:A,eq:mixedTermIntegral:B,eq:mixedTermIntegral:C}. It is left to specify the definitions of $D^{\omega}_1E_{\varepsilon}$ and $D^{\omega}_2E_{\varepsilon}$ in~\cref{eq:integralExpansion:B}. To get a factor~$\frac{1}{\omega}$, we apply integration by parts to the integrals in~\cref{eq:kineticEnergyIntegral:C,eq:kineticEnergyIntegral:D,eq:kineticEnergyIntegral:E,eq:potentialEnergyIntegral:B,eq:mixedTermIntegral:D,eq:mixedTermIntegral:E,eq:mixedTermIntegral:F,eq:mixedTermIntegral:G}. For example, integration by parts in~\cref{eq:potentialEnergyIntegral:B} leads to
\begin{subequations}\label{eq:potentialEnergyIntegral:B:list}
\begin{align}
& \sum_{\ell}\int_{t_1}^{t_2}U^{\omega}_{\ell}(t)\,\langle\!\langle\nabla{V}(p(t)),f^{\omega}_{\ell}(p(t))\rangle\!\rangle\,\mathrm{d}t \ = \ \\
& \hphantom{-} \Big[\sum_{\ell}\hat{U}^{\omega}_{\ell}(t)\,\langle\!\langle\nabla{V}(p(t)),f^{\omega}_{\ell}(p(t))\rangle\!\rangle\Big]_{s=t_1}^{s=t_2} \label{eq:potentialEnergyIntegral:B:A} \allowdisplaybreaks \\
& - \sum_{\ell}\int_{t_1}^{t_2}\hat{U}^{\omega}_{\ell}(t)\,\langle\!\langle\nabla^2{V}(p(t))\,\tilde{v}(t),f^{\omega}_{\ell}(p(t))\rangle\!\rangle\,\mathrm{d}t \label{eq:potentialEnergyIntegral:B:B} \allowdisplaybreaks \\
& - \sum_{\ell,\ell'}\int_{t_1}^{t_2}\hat{U}^{\omega}_{\ell,\ell'}(t)\,\langle\!\langle\nabla^2{V}(p(t))\,f^{\omega}_{\ell}(p(t)),f^{\omega}_{\ell'}(p(t))\rangle\!\rangle\,\mathrm{d}t \label{eq:potentialEnergyIntegral:B:C} \allowdisplaybreaks \\
& - \sum_{\ell}\int_{t_1}^{t_2}\hat{U}^{\omega}_{\ell}(t)\,\langle\!\langle\nabla{V}(p(t)),\operatorname{D}\!f^{\omega}_{\ell}(p(t))\tilde{v}(t)\rangle\!\rangle\,\mathrm{d}t \label{eq:potentialEnergyIntegral:B:D} \allowdisplaybreaks \\
& - \sum_{\ell,\ell'}\int_{t_1}^{t_2}\!\hat{U}^{\omega}_{\ell',\ell}(t)\,\langle\!\langle\nabla{V}(p(t)),\operatorname{D}\!f^{\omega}_{\ell}(p(t))f^{\omega}_{\ell'}(p(t))\rangle\!\rangle\,\mathrm{d}t, \label{eq:potentialEnergyIntegral:B:E}
\end{align}
\end{subequations}
where the antiderivative $\hat{U}^{\omega}_{\ell}\colon\mathbb{R}\to\mathbb{R}$ of $U^{\omega}_{\ell}$ is defined by
\[
\hat{U}^{\omega}_{\ell}(t) \ := \ -\frac{2}{\omega\,\omega_{\ell}}\,\cos(\omega\,\omega_{\ell}\,s + \varphi_{\ell})
\]
and the function $\hat{U}^{\omega}_{\ell,\ell'}\colon\mathbb{R}\to\mathbb{R}$ is defined by
\[
\hat{U}^{\omega}_{\ell,\ell'}(t) \ := \ U^{\omega}_{\ell}(t)\,\hat{U}^{\omega}_{\ell'}(t).
\]
Note that both $\hat{U}^{\omega}_{\ell}$ and $\hat{U}^{\omega}_{\ell,\ell'}$ contain the desired factor~$\frac{1}{\omega}$. The negative of the expression in the square brackets in~\cref{eq:potentialEnergyIntegral:B:A} contributes to $D^{\omega}_1E_{\varepsilon}$. The integrands in the remaining terms in~\cref{eq:potentialEnergyIntegral:B:B,eq:potentialEnergyIntegral:B:C,eq:potentialEnergyIntegral:B:D,eq:potentialEnergyIntegral:B:E} contribute to $D^{\omega}_2E_{\varepsilon}$. When we apply the same procedure to~\cref{eq:kineticEnergyIntegral:C,eq:kineticEnergyIntegral:D,eq:kineticEnergyIntegral:E,eq:potentialEnergyIntegral:B,eq:mixedTermIntegral:D,eq:mixedTermIntegral:E,eq:mixedTermIntegral:F,eq:mixedTermIntegral:G}, we obtain all contributions to the functions $D^{\omega}_1E_{\varepsilon}$ and $D^{\omega}_2E_{\varepsilon}$ in~\cref{eq:integralExpansion}.

It remains to prove the asserted estimates for the above functions $D^{\omega}_1E_{\varepsilon}$ and $D^{\omega}_2E_{\varepsilon}$. For this purpose, we derive suitable estimates for their constituents. It follows directly from the definitions of the $\hat{U}^{\omega}_{\ell}$ and the $\hat{U}^{\omega}_{\ell,\ell'}$ that there exist $c_1,c_2>0$ such that
\begin{equation}\label{eq:UEllEstimates}
\big|\hat{U}^{\omega}_{\ell}(t)\big| \ \leq \ \frac{c_1}{\omega} \qquad \text{and} \qquad \big|\hat{U}^{\omega}_{\ell,\ell'}(t)\big| \ \leq \ \frac{c_2}{\omega}    
\end{equation}
for every $\omega>0$, all $\ell,\ell'\in\Lambda$, and every $t\in\mathbb{R}$. Fix an arbitrary $L>0$. We conclude from Proposition~\ref{thm:11112017} that the derivatives of~$V$ are bounded by constants on the $L$-sublevel set of~$V$. It also follows from Proposition~\ref{thm:11112017} that there exist $\alpha_1,\alpha_2>0$ such that
\[
\|\nabla{V_i(p)}\|^2 \ \leq \ \alpha_1\,V_i(p) \quad \text{and} \quad \|\nabla^2V_i(p)\| \ \leq \ \alpha_2
\]
for every $i\in\{1,\ldots,N\}$ and every $p\in\mathbb{R}^{nN}$ with $V(p)\leq{L}$, where $\|\nabla^2V_i(p)\|$ denotes the induced matrix norm of $\nabla^2V_i(p)$. A direct computation of the derivatives of the vector fields~$f^{\omega}_{\ell}$ in~\cref{eq:newVectorFields} followed by an application of the above estimates for $\nabla{V_i(p)}$ and $\nabla^2{V_i(p)}$ shows that there exist $\beta_0,\beta_1,\beta_2>0$ such that
\begin{subequations}\label{eq:badEstimates}
\begin{align}
\|f^{\omega}_{\ell}(p)\| & \ \leq \ \beta_0, \label{eq:badEstimates:A} \allowdisplaybreaks \\
\|\operatorname{D}\!f^{\omega}_{\ell}(p)v\| & \ \leq \ \beta_1\,\|v\| , \label{eq:badEstimates:B} \allowdisplaybreaks \\
\|\operatorname{D}^2\!f^{\omega}_{\ell}(p)(v,w)\| & \ \leq \ \sqrt{\omega}\,\beta_2\,\|v\|\,\|w\| \label{eq:badEstimates:C}
\end{align}
\end{subequations}
for every $\omega\geq1$, every $\ell\in\Lambda$, every $p\in\mathbb{R}^{nN}$ with $V(p)\leq{L}$, and all $v,w\in\mathbb{R}^{nN}$. Now we have estimates for all constituents of $D^{\omega}_1E_{\varepsilon}$ and $D^{\omega}_2E_{\varepsilon}$. Except for the terms in~\cref{eq:UEllEstimates} and \cref{eq:badEstimates:C}, all other constituents of $D^{\omega}_1E_{\varepsilon}$ and $D^{\omega}_2E_{\varepsilon}$ are bounded by a constant on the $L$-sublevel set of~$E_0$, uniformly in~$\omega$. Note that the second derivatives of the vector fields~$f^{\omega}_{\ell}$ in \cref{eq:badEstimates:C} only contribute to $D^{\omega}_2E_{\varepsilon}$ but not to $D^{\omega}_1E_{\varepsilon}$. For this reason, we get a factor~$\frac{1}{\omega}$ in the estimate for $D^{\omega}_1E_{\varepsilon}$ and a factor $\frac{1}{\sqrt{\omega}}$ in the estimate for $D^{\omega}_2E_{\varepsilon}$.
\end{proof}

\subsection{Proof of practical exponential stability}
To complete the proof of Theorem~\ref{thm:mainResult}, suppose that for every point~$p$ of~\cref{eq:setOfDesiredFormations}, the framework $\mathcal{G}(p)$ is infinitesimally rigid. Then, there exist $\varepsilon,L,\gamma_2>0$ as in Lemma~\ref{thm:ChetaevTrick}. Next, for every $\omega>0$, choose smooth real-valued functions $D^{\omega}_1E_{\varepsilon},D^{\omega}_2E_{\varepsilon}$ on $\mathbb{R}^{2nN}$ as in Proposition~\ref{thm:averaging}. Using a standard comparison lemma for ordinary differential equations, we conclude from~\cref{eq:integralExpansion} that, for every $\omega>0$, every solution $\tilde{x}=\colon{I}\to\mathbb{R}^{2nN}$ of~\cref{eq:transformedControlAffineForm}, and all $t_2>t_2$ in $I$, the following implication holds: if $E(\tilde{x}(t))\leq{L}$ for every $t\in[t_1,t_2]$, then
\[
m(t_2) \ \leq \ m(t_1)\,\mathrm{e}^{-\gamma_2(t_2-t_1)} + \int_{t_1}^{t_2}\mathrm{e}^{-\gamma_2(t_2-t)}\,u(t)\,\mathrm{d}t
\]
with the abbreviations
\begin{align*}
m(t) & \ := \ E_{\varepsilon}(\tilde{x}(t))+(D^{\omega}_1E_{\varepsilon})(t,\tilde{x}(t)), \\
u(t) & \ := \ \gamma_2\,(D^{\omega}_1E_{\varepsilon})(t,\tilde{x}(t)) + (D^{\omega}_2E_{\varepsilon})(t,\tilde{x}(t)).
\end{align*}
Since $D^{\omega}_1E_{\varepsilon}$ and $D^{\omega}_2E_{\varepsilon}$ satisfy the estimates in Proposition~\ref{thm:averaging}, this implies, after possibly shrinking $L>0$, that there exist $\omega_0,\rho,\mu,\lambda>0$ such that for every $\omega\geq\omega_0$, every $t_0\in\mathbb{R}$, and every $\tilde{x}_0\in\mathbb{R}^{2nN}$ with $E(\tilde{x}_0)\leq{L}$, the maximal solution $\tilde{x}=(p^{\top},\tilde{v}^{\top})$ of~\cref{eq:transformedControlAffineForm} with $\tilde{x}(t_0)=\tilde{x}_0$ satisfies
\begin{align*}
E_{\varepsilon}(\tilde{x}(t)) \ \leq \ \sqrt{\frac{\rho}{\omega}} + \lambda\,E_{\varepsilon}(\tilde{x}_0)\,\mathrm{e}^{-\mu(t-t_0)}
\end{align*}
for every $t\geq{t_0}$. Because of Lemma~\ref{thm:ChetaevTrick}, an estimate of the above form, but with possibly different constants $\omega_0,\rho,L,\lambda$, also holds for the total energy~$E=E_0$ along the solutions of~\cref{eq:transformedControlAffineForm}. Now, by adjusting again the constants $\omega_0,\rho,L,\lambda$, the claim follows from Lemma~\ref{thm:transformedEnergy}.
\end{document}